\documentclass[pdflatex,sn-mathphys-num]{sn-jnl}
\usepackage{bm}
\usepackage{amsmath,amssymb,amsthm,mathtools}
\usepackage{enumitem}
\usepackage{hyperref}
\newtheorem{theorem}{Theorem}
\newtheorem{lemma}[theorem]{Lemma}
\newtheorem{proposition}[theorem]{Proposition}
\newtheorem{remark}[theorem]{Remark}

\newtheorem{assumption}[theorem]{Assumption}
\numberwithin{equation}{section}
\numberwithin{theorem}{section}
\numberwithin{table}{section}
\numberwithin{figure}{section}

\begin{document}
	
\title[ReLU complex]{ReLU$^k$ Neural de Rham Complexes}
\author[1]{\fnm{Kaibo} \sur{Hu}}\email{kaibo.hu@maths.ox.ac.uk}

\author*[1]{\fnm{Jindong} \sur{Wang}}\email{jindong.wang@maths.ox.ac.uk}

\author[2]{\fnm{Jinchao} \sur{Xu}}\email{jinchao.xu@kaust.edu.sa}

\affil[1]{\orgdiv{Mathematical Institute}, \orgname{University of Oxford}, \orgaddress{\street{Andrew Wiles Building}, \city{Oxford}, \postcode{OX2 6GG}, \country{United Kingdom}}}

\affil[2]{
  \orgdiv{Applied Mathematics and Computational Science, CEMSE Division},
  \orgname{King Abdullah University of Science and Technology},
  \orgaddress{%
    \city{Thuwal},
    \postcode{23955},
    \country{Saudi Arabia}}}

\abstract{
We construct finite-dimensional de Rham subcomplexes generated by fixed-neuron shallow ReLU$^k$ neural networks, a class of spaces known to provide optimal approximation rates. For neurons of the form $s_i(x)=\omega_i\cdot x+b_i$, we introduce spaces of
neural differential forms: differential $p$-forms whose coefficients are
the ReLU$^k$ ridge functions $\sigma_{k-p}(s_i)$. These spaces are compatible with the exterior derivative because differentiating a ReLU power lowers its order by one, and for each fixed neuron, differentiation amounts to exterior multiplication by the fixed one-form $ d s_i$. Under a linear independence assumption on the lowest-order family $\{\sigma_{k-d}(s_i)\}_{i=1}^n$, the global complex decomposes into independent neuron-wise Koszul complexes. We prove exactness in arbitrary dimension and provide a geometric sufficient condition for the required linear independence. Numerical experiments based on the resulting complex provide evidence of
stable discretizations and of convergence rates consistent with the
underlying approximation theory, and exhibit no spurious modes in
eigenvalue problems considered.
}

\keywords{ReLU neural networks, de~Rham complex, Koszul complex, exact sequence, structure-preserving discretization}

\pacs[MSC Classification]{41A30, 41A46, 55U15, 58A10, 65N25, 65N30, 68T07}

\maketitle

\section{Introduction}
\label{sec:introduction}

The de~Rham complex is a fundamental structure in differential geometry and partial differential equations. It organizes scalar and vector fields within the unified language of differential forms, connected by the exterior derivative. This complex not only encodes the identities satisfied by classical differential operators, such as grad, curl, and div, but also reveals the underlying topological structure of the domain through its cohomology. Beyond its geometric interpretation, the de Rham complex plays a central role in characterizing compatibility conditions and topological invariants of differential equations.

Finite element exterior calculus (FEEC) provides a powerful and promising framework for discretizing the de~Rham complex. It has demonstrated that preserving the underlying algebraic and geometric structures is essential for the design and analysis of stable numerical schemes for partial differential equations \cite{arnold2006finite,arnold2010finite,arnold2018finite}.  A central principle in FEEC is that finite-dimensional spaces are organized into subcomplexes of the continuous de~Rham complex, so that discrete differential operators preserve the relations of their continuous counterparts.  Among these structures, exactness and cohomology on a topologically trivial domain are fundamental properties: they rule out spurious closed fields, ensure the correct representation of constraint spaces, and provide a basis for discrete Poincar\'e inequalities and the inf-sup stability of mixed formulations \cite{boffi2013mixed}.  When combined with bounded commuting projections and appropriate discrete norms, it leads to stable numerical schemes with robustness under refinement.

This viewpoint has been significantly extended in recent years.  Nodal and generalized finite element systems provide flexible constructions of de Rham subcomplexes beyond classical polynomial elements \cite{christiansen2018nodal,christiansen2018generalized}.  The Bernstein--Gelfand--Gelfand (BGG) machinery further allows one to generate new complexes from existing ones and has become an important tool in elasticity and related geometric PDEs \cite{cap2001bernstein,arnold2021complexes,cap2024bgg}.  These developments highlight that exact sequences are not merely algebraic artifacts, but structural principles governing stability and compatibility in numerical discretizations.

In parallel, neural network (NN) approximation theory has developed into a mature field for high-dimensional function approximation and numerical analysis of PDEs.  Classical results established universal approximation properties of shallow networks \cite{cybenko1989approximation,hornik1989multilayer}.  More recent advances have quantified approximation rates for deep and shallow ReLU-based architectures and clarified their connection to high-order finite element methods and spectral representations \cite{schwab2019deep,opschoor2020deep,schwab2023deep}.  In particular, there has also been growing interest in the relationship between neural networks and finite element structures, including ReLU representations of finite element
functions and finite-neuron methods \cite{he2020relu,xu2020finite}. For
ReLU$^k$ activations, sharp and high-order approximation results have been
established for suitable smoothness and variation classes
\cite{siegel2022highorder,siegel2024sharp}.

Despite these developments, the algebraic structure of neural network spaces has remained only partially connected to the geometric and topological framework of FEEC.  A related contribution is the work of Longo et al.~\cite{longo2023rham}, which constructs deep neural network emulations of finite element spaces appearing in the discrete de Rham complex, and demonstrates that compatible finite element spaces can be represented by suitable deep network architectures.  However, it does not provide an intrinsic de Rham sequence within a fixed-neuron shallow ReLU space. 

Standard shallow neural network classes are nonlinear approximation spaces,
since both the inner neuron parameters and the outer coefficients are
trainable. By contrast, recent work on linearized shallow ReLU$^k$
approximation considers a fixed-neuron regime, in which the neurons are
prescribed, for instance quasi-uniformly on the unit sphere, and only the
outer coefficients are trained \cite{liu2025integral}. Their results establish Sobolev integral representations and optimal approximation rates for the corresponding linearized spaces, showing that fixed-neuron ReLU$^k$ spaces can retain optimal approximation properties while reducing the approximation problem to a linear one. A related development is the divergence-free linearized ReLU$^k$ construction in \cite{he2026divergence}, which builds exactly divergence-free shallow neural approximation spaces through potential representations. Taken together, these works suggest that linearized shallow ReLU networks can preserve strong approximation power while naturally accommodating structural constraints.

Motivated by this line of work, the goal of the paper is to construct a  de~Rham sequence directly inside a fixed-neuron shallow ReLU$^k$ framework.  Instead of emulating existing finite element spaces by deep networks, we define neural differential form spaces generated by the same affine neurons and prove that they form an exact de Rham complex under a natural lowest-order linear independence assumption. We consider affine ridge functions
\begin{equation}
s_i(x)=\omega_i\cdot x + b_i,
\qquad \omega_i \neq 0,
\end{equation}
and construct finite-dimensional differential form spaces by combining ReLU$^k$ ridge functions with constant-coefficient differential forms.  The key structural observation is that the exterior derivative acts in a decoupled manner on each neuron:
it simultaneously lowers the ReLU degree and applies a fixed algebraic operation determined by the affine function.

The main contribution of this paper is to construct an exact de Rham subcomplex within fixed-neuron shallow ReLU$^k$ spaces. Specifically, we define neural differential form spaces compatible with the exterior derivative. Under a linear independence assumption on the lowest-order ridge functions, we prove that these spaces form the exact sequence
$$
0
\longrightarrow \mathcal{C}^0
\xrightarrow{d}
\mathcal{C}^1
\xrightarrow{d}
\cdots
\xrightarrow{d}
\mathcal{C}^d
\longrightarrow 0.
$$
The proof relies on two elementary mechanisms. First, differentiation transfers linear independence from lower-order ReLU powers to higher-order ones. Second, the exactness with respect to form degree reduces, neuron by neuron, to the Koszul complex generated by the covector $ d s_i$. The linear independence assumption ensures that the contributions from different neurons decouple, so that the neuron-wise exactness yields exactness of the global neural complex. Consequently, the construction provides a discrete kernel–range identity for these neural differential form spaces.

The paper is organized as follows. Section~\ref{sec:prelim} introduces the
neural networks and differential form notation.
Section~\ref{sec:relu-complex} introduces the ReLU$^k$ neural differential-form spaces and the associated de~Rham complex.
Section~\ref{sec:independence} discusses linear independence assumptions and sufficient geometric conditions. Section~\ref{sec:3d} presents the three-dimensional ReLU grad–curl–div complex and proves its exactness as a simple illustration. Section~\ref{sec:main} establishes the neuron-wise Koszul homotopy and proves the exact neural de Rham sequence. Section~\ref{sec:numerical} presents numerical experiments based on the proposed complex. Section~\ref{sec:discussion} concludes the paper with a discussion of the main results and possible directions for future work.
    
\section{Preliminaries}\label{sec:prelim}

We begin by introducing the basic notation and definitions used throughout the paper, including the neural network spaces and differential forms. Let $\Omega\subset\mathbb  R  ^d$ be a bounded contractible domain. For a non-polynomial activation function $\phi:\mathbb  R  \to\mathbb  R  $, the shallow neural network class is
\begin{equation}
\Sigma^{\phi}
:=
\Bigl\{\sum_{i=1}^n a_i\phi(\omega_i\cdot x+b_i): a_i\in\mathbb  R  ,\ \omega_i\in\mathbb  R  ^d,\ b_i\in\mathbb  R ,n\in \mathbb N \Bigr\}.
\end{equation}
This class has the universal approximation property \cite{hornik1989multilayer}. The ReLU$^k$ activation function is given by
\begin{equation}
\sigma_k(t)=\max\{t,0\}^k,
\qquad k\in\mathbb N_0.
\end{equation}
Due to the positive homogeneity $\sigma_k(at)=a^k\sigma_k(t)$ for $a>0$, we adopt the normalized parameterization
\begin{equation}
\theta_i=(\omega_i,b_i)\in \mathbb S ^d\subset\mathbb  R  ^{d+1},
\qquad \widetilde x=(x,1)\in\mathbb  R  ^{d+1},
\end{equation}
so that
\begin{equation}
\sigma_k(\omega_i\cdot x+b_i)=\sigma_k(\theta_i\cdot\widetilde x).
\end{equation}
Given fixed neurons $\Theta=\{\theta_i\}_{i=1}^n\subset\mathbb S ^d$, we define the linearized neural network space
\begin{equation}
L_n^k(\Theta)
:=
\operatorname{span}\{\sigma_k(\theta_i\cdot \widetilde x)\}_{i=1}^n
=\operatorname{span}\{\sigma_k(\omega_i\cdot x+b_i)\}_{i=1}^n.
\end{equation}
When $\Theta$ is understood, we simply write $L_n^k$.

\paragraph{Approximation-theoretic background.}
Recent results in \cite{liu2025integral} show that, for quasi-uniform parameter sets on $\mathbb S ^d$, the spaces $L_n^k$ achieve the optimal approximation rate
\begin{equation}
\inf_{v\in L_n^k}\|u-v\|_{L^2(\Omega)}
\lesssim
n^{-\frac12-\frac{2k+1}{2d}}\,\|u\|_{H^{\frac{d+2k+1}{2}}(\Omega)},
\end{equation}
and that the Sobolev space $H^{\frac{d+2k+1}{2}}(\Omega)$ admits the integral representation
\begin{equation}
H^{\frac{d+2k+1}{2}}(\Omega)
=
\Bigl\{x\mapsto\int_{\mathbb S ^d}\sigma_k(\theta\cdot\widetilde x)\,\psi(\theta)\,d\theta:
\psi\in L^2(\mathbb S ^d)\Bigr\}.
\end{equation}
We use these approximation results as motivation for the construction below.

\paragraph{Constant-coefficient differential forms.}
For $0\le p\le d$, let $\Lambda^p$ denote the space of constant-coefficient $p$-forms on $\mathbb  R  ^d$.
These are the differential forms that can be written as
\begin{equation}
\alpha=\sum_{|I|=p} \alpha_I\,dx_I,
\qquad \alpha_I\in\mathbb  R  ,
\end{equation}
where $I=(i_1,\ldots,i_p)$ ranges over increasing multi-indices and $dx_I=dx_{i_1}\wedge\cdots\wedge dx_{i_p}$. Since the coefficients $\alpha_I$ are constant, one has $d\alpha=0$. In dimension $d=3$, the Euclidean metric identifies
\begin{equation}
\Lambda^0\simeq \mathbb  R  ,
\qquad
\Lambda^1\simeq \mathbb  R  ^3,
\qquad
\Lambda^2\simeq \mathbb  R  ^3,
\qquad
\Lambda^3\simeq \mathbb  R  .
\end{equation}
Fix affine functions
\begin{equation}
s_i(x)=\omega_i\cdot x+b_i,
\qquad i=1,\dots,n,
\end{equation}
with $\omega_i\neq 0$. Then
\begin{equation}
ds_i=\sum_{j=1}^d \omega_{ij}\,dx_j\in\Lambda^1.
\end{equation}
Here $\omega_{ij}$ denotes the $j$-th component of the vector $\omega_i=(\omega_{i1},\ldots,\omega_{id})^T$. Throughout, for an integer $r\ge 0$ we write $\mathcal P _r(\Omega)$ for the restrictions to $\Omega$ of polynomials on $\mathbb  R  ^d$ of degree at most $r$.

\section{The ReLU$^k$ complex}\label{sec:relu-complex}

We now formulate the main complex in arbitrary dimension. Throughout this
section, assume that $k\ge d$. For $0\le p\le d$, define
\begin{equation}
\mathcal C^p
:=
\left\{
\sum_{i=1}^n \sigma_{k-p}(s_i)\,\alpha_i
:\;
\alpha_i\in\Lambda^p
\right\}.
\end{equation}
Thus $\mathcal C^p$ is the fixed-neuron ReLU$^k$ space of
constant-coefficient $p$-forms, with the ReLU degree decreasing by one at
each differential degree.

We first record the basic differential inclusion.

\begin{proposition}[Differential inclusion]\label{prop:inclusion}
For $0\le p\le d-1$, we have
\begin{equation}
d\mathcal C^p\subset \mathcal C^{p+1}.
\end{equation}
More precisely, for every constant-coefficient $p$-form
$\alpha\in\Lambda^p$ and every neuron $s_i$,
\begin{equation}
d\bigl(\sigma_{k-p}(s_i)\alpha\bigr)
=
(k-p)\sigma_{k-p-1}(s_i)\,ds_i\wedge\alpha .
\end{equation}
\end{proposition}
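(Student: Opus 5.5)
The plan is to prove the pointwise formula for a single neuron first and then obtain the inclusion $d\mathcal C^p\subset\mathcal C^{p+1}$ by linearity. The basic tool is the Leibniz rule for the exterior derivative of a function times a form, $d(f\alpha)=df\wedge\alpha+f\,d\alpha$. Since $\alpha\in\Lambda^p$ has constant coefficients, $d\alpha=0$, and so $d(\sigma_{k-p}(s_i)\alpha)=d\bigl(\sigma_{k-p}(s_i)\bigr)\wedge\alpha$.

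Next I will compute the differential of the scalar function $\sigma_{k-p}(s_i)$ by the chain rule. The relevant identity is $\sigma_m'(t)=m\,\sigma_{m-1}(t)$ for $m\ge 1$. Here $m=k-p\ge k-d+1\ge 1$, because $p\le d-1$ and $k\ge d$. Combined with $d(s_i)=ds_i=\sum_j\omega_{ij}\,dx_j$, this gives
\[
d\bigl(\sigma_{k-p}(s_i)\bigr)=(k-p)\,\sigma_{k-p-1}(s_i)\,ds_i .
\]
Substituting into the Leibniz rule yields the displayed formula. Since $ds_i\wedge\alpha\in\Lambda^{p+1}$ is again constant-coefficient, each term lies in $\mathcal C^{p+1}$. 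For a general element $\sum_i\sigma_{k-p}(s_i)\alpha_i$, linearity of $d$ then gives the image $\sum_i\sigma_{k-p-1}(s_i)\,\bigl((k-p)\,ds_i\wedge\alpha_i\bigr)\in\mathcal C^{p+1}$.

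The only delicate step is justifying the chain rule, given the limited regularity of $\sigma_m$ at $t=0$.
\begin{itemize}
\item When $m\ge 2$, $\sigma_m\in C^1(\mathbb R)$ and the derivative formula holds classically everywhere.
\item When $m=1$, which occurs only when $p=k-1$ (so $k=d$, $p=d-1$), $\sigma_1$ is Lipschitz. The identity $\sigma_1'=\sigma_0=\chi_{\{t>0\}}$ then holds almost everywhere and in the weak sense.
\end{itemize}
In the case $m=1$ I will argue that $\sigma_1(s_i)\in W^{1,\infty}(\Omega)$, using the weak chain rule for Lipschitz functions composed with affine maps. Then $d\bigl(\sigma_1(s_i)\alpha\bigr)=\sigma_0(s_i)\,ds_i\wedge\alpha$ holds as an identity of $L^\infty$ forms, i.e.\ distributionally. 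This is the sense appropriate for $\mathcal C^d$, whose coefficients are Heaviside functions. The value of $\sigma_0$ at $0$ is irrelevant, since the hyperplane $\{s_i=0\}$ has measure zero because $\omega_i\neq0$.
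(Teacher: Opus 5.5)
Your proof is correct and follows the same route as the paper: Leibniz with $d\alpha=0$, the chain rule $\sigma_m'=m\,\sigma_{m-1}$ applied to the affine $s_i$, then linearity. Your treatment of the case $m=1$ (that is, $k=d$, $p=d-1$, where the identity holds only a.e./weakly with Heaviside coefficients in $\mathcal C^d$) is correct and makes explicit a regularity point the paper leaves implicit.
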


\begin{proof}
Since $\alpha$ has constant coefficients, $d\alpha=0$. Moreover, because
$s_i$ is affine, $ds_i$ is a constant one-form. Hence
\begin{equation}
d\bigl(\sigma_{k-p}(s_i)\alpha\bigr)
=
d\bigl(\sigma_{k-p}(s_i)\bigr)\wedge\alpha
=
(k-p)\sigma_{k-p-1}(s_i)\,ds_i\wedge\alpha .
\end{equation}
The right-hand side belongs to $\mathcal C^{p+1}$, which proves the claim.
\end{proof}

Combining Proposition~\ref{prop:inclusion} with the identity $d\circ d=0$,
we obtain the finite-dimensional differential complex
\begin{equation}\label{eq:relu-complex}
0
\longrightarrow \mathcal C^0
\xrightarrow{d} \mathcal C^1
\xrightarrow{d}
\cdots
\xrightarrow{d} \mathcal C^d
\longrightarrow 0 .
\end{equation}
It remains to prove exactness of this complex.

\section{Linear independence of neurons}\label{sec:independence}
The proof of exactness of the complex requires a linear independence condition on the lowest-order ReLU ridge functions given by the prescribed neurons. In this section, we formulate this requirement and provide a geometric sufficient condition under which it holds. This condition excludes degenerate configurations in which different neurons produce linearly dependent ridge functions, while still preserving the optimal convergence properties of the approximation space.

\begin{assumption}[Nondegeneracy]\label{asm:nondegeneracy}
    The normalized parameters $\theta_i=(\omega_i,b_i)\in\mathbb S ^d$ satisfy
    \begin{equation}
    \theta_i\neq \theta_j,
    \qquad
    \theta_i\neq -\theta_j,
    \qquad i\neq j,
    \end{equation}
    and each hyperplane
    \begin{equation}
    H_i:=\{x\in\mathbb R^d:s_i(x)=0\}
    \end{equation}
    intersects $\Omega$, i.e., $H_i\cap\Omega\neq \emptyset$.
\end{assumption}

\begin{remark}
The exclusion of antipodal pairs rules out the elementary ReLU–polynomial relation
\begin{equation}
\sigma_r(t)+(-1)^r\sigma_r(-t)=t^r,
\qquad r\in\mathbb N_0.
\end{equation}
Such polynomial components can create additional linear dependencies when several antipodal pairs are present.
\end{remark}

The following result shows that linear independence propagates from a lower-order ReLU NN space to all higher-order ones.
\begin{lemma}[Induction of linear independence]\label{lem:induction}
    Let $r\in\mathbb N_0$ and suppose that
    \begin{equation}
    \{\sigma_r(s_i)\}_{i=1}^n
    \end{equation}
    is linearly independent on $\Omega$. Then for every integer $m\ge r$, the family
    \begin{equation}
    \{\sigma_m(s_i)\}_{i=1}^n
    \end{equation}
    is linearly independent on $\Omega$.
\end{lemma}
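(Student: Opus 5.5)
We argue by induction on $m$: it suffices to pass from $m$ to $m+1$, since $m=r$ is the hypothesis. So assume $\{\sigma_m(s_i)\}_{i=1}^n$ is linearly independent on $\Omega$, and let coefficients $c_i\in\mathbb R$ satisfy
\begin{equation*}
f:=\sum_{i=1}^n c_i\,\sigma_{m+1}(s_i)=0\quad\text{on }\Omega .
\end{equation*}
We will show that all $c_i=0$ by differentiating this relation.

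For $m+1\ge 1$ the function $\sigma_{m+1}$ is absolutely continuous. For $m\ge1$ it is $C^1$ with $\sigma_{m+1}'=(m+1)\sigma_m$. For $m=0$ it is Lipschitz with a.e.\ derivative $\sigma_0$, the Heaviside function, where the value at $0$ is irrelevant because it occupies a null set. By the chain rule, in the classical or weak sense,
\begin{equation*}
\partial_{x_j}\sigma_{m+1}(s_i)=(m+1)\,\omega_{ij}\,\sigma_m(s_i).
\end{equation*}
Since $f$ vanishes identically on the open set $\Omega$, every (weak) partial derivative of $f$ vanishes as well. This gives, for each $j=1,\dots,d$,
\begin{equation*}
\sum_{i=1}^n c_i\,\omega_{ij}\,\sigma_m(s_i)=0\quad\text{a.e.\ on }\Omega .
\end{equation*}
If $m\ge1$ these functions are continuous, so the identity holds pointwise. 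If $m=0$, linear independence is naturally understood in $L^2(\Omega)$, that is, a.e.

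By the induction hypothesis, $c_i\,\omega_{ij}=0$ for all $i$ and $j$. Since $\omega_i\neq0$, for each $i$ we can pick $j$ with $\omega_{ij}\ne0$, and this forces $c_i=0$. This proves independence at level $m+1$, and the induction closes.

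The only delicate point is the base case $r=0$, where $\sigma_1$ is not $C^1$. The concern is whether differentiating the identity is legitimate and whether pointwise and a.e.\ independence of the Heaviside family agree. We handle this with weak derivatives: $f=0$ in $L^1_{\mathrm{loc}}(\Omega)$ implies $\nabla f=0$ weakly, and the weak gradient of $\sigma_1(s_i)$ is $\omega_i\sigma_0(s_i)$. We then interpret the hypothesis for $r=0$ as a.e.\ linear independence, which is the sense consistent with the $L^2$ setting of the paper.
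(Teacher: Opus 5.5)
Your proof is correct and is essentially the paper's argument. The paper applies a single directional derivative $D_v^{m-r}$, with $v$ chosen so that $v\cdot\omega_i\neq0$ for all $i$; you instead descend one order at a time using all coordinate partials, which avoids choosing a generic direction. Your weak-derivative, a.e.\ treatment of $r=0$ fills in a point the paper passes over, and it is genuinely needed rather than a convenience: with a pointwise convention such as $\sigma_0(0)=1$, the neurons $\pm x_1,\pm x_2,\pm(x_1+x_2)$ on a disk centred at the origin give a pointwise independent Heaviside family whose $\sigma_1$ family is dependent.
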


\begin{proof}
    Assume
    \begin{equation}
    \sum_{i=1}^n a_i\sigma_m(s_i(x))=0
    \qquad\text{on }\Omega.
    \end{equation}
    Choose $v\in\mathbb  R  ^d$ such that $v\cdot\omega_i\neq0$ for all $i$, this is possible because the finite union of hyperplanes
    $\{v\in\mathbb  R  ^d:v\cdot\omega_i=0\}$ cannot fill $\mathbb  R  ^d$. Applying the directional derivative $D_v^{m-r}$ with respect to $x$ gives
    \begin{equation}
    0=D_v^{m-r}\sum_{i=1}^n a_i\sigma_m(s_i(x))
    =\frac{m!}{r!}\sum_{i=1}^n a_i(v\cdot\omega_i)^{m-r}\sigma_r(s_i(x)).
    \end{equation}
    By the independence of $\{\sigma_r(s_i)\}_{i=1}^n$, each coefficient vanishes:
    \begin{equation}
    a_i(v\cdot\omega_i)^{m-r}=0.
    \end{equation}
    Since $v\cdot\omega_i\neq0$, we conclude $a_i=0$ for all $i$.
\end{proof}

We next make Assumption~\ref{asm:nondegeneracy} more explicit by showing that it provides a sufficient geometric condition for the required linear independence.
\begin{lemma}[A sufficient geometric condition]\label{lem:sufficient}
    Assume that Assumption \ref{asm:nondegeneracy} holds. Then, for every $r\in\mathbb N_0$,
    \begin{equation}
    \operatorname{span}\{\sigma_r(s_i)\}_{i=1}^n\cap \mathcal P _r(\Omega)=\{0\}.
    \end{equation}
    In particular, the family $\{\sigma_r(s_i)\}_{i=1}^n$ is linearly independent on $\Omega$.
\end{lemma}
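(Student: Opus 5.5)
The plan is to suppose that
\begin{equation}
f(x):=\sum_{i=1}^n a_i\sigma_r(s_i(x)) = q(x) \qquad\text{on }\Omega,
\end{equation}
with $q\in\mathcal P_r(\Omega)$, and to show that every $a_i=0$. From this, $q=0$ follows, and taking $q=0$ gives linear independence. The strategy is to localize near each hyperplane $H_i$ and read off $a_i$ from the jump in the $r$-th normal derivative across $H_i$, where $f$ fails to be smooth.

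Fix an index $i$. First I will produce a point $x_0\in H_i\cap\Omega$ that lies on no other $H_j$, $j\neq i$. The hypothesis $\theta_j\neq\pm\theta_i$ means that, after normalization on $\mathbb S^d$, the affine functions $s_j$ and $s_i$ are not proportional. Hence $H_j\neq H_i$, so $H_j\cap H_i$ is either empty or an affine subspace of dimension $d-2$. The set $H_i\cap\Omega$ is a nonempty relatively open subset of the hyperplane $H_i$, because $\Omega$ is open. A nonempty relatively open subset of a hyperplane cannot be covered by finitely many affine subspaces of codimension at least one in $H_i$, so such an $x_0$ exists. Next I choose a ball $B\subset\Omega$ around $x_0$ that meets no $H_j$ with $j\neq i$. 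On $B$, each $\sigma_r(s_j)$ with $j\neq i$ coincides with either $0$ or $s_j^r$, since $s_j$ has constant sign on $B$, so it is a polynomial of degree at most $r$ there. Therefore on $B$ we have
\begin{equation}
a_i\sigma_r(s_i)=\tilde q
\end{equation}
for some polynomial $\tilde q$ of degree at most $r$.

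It remains to show that $\sigma_r(s_i)$ is not a polynomial on $B$; this is the only place where care is needed. On the half-ball $B^-=\{s_i<0\}\cap B$ the function $\sigma_r(s_i)$ vanishes, so $\tilde q\equiv0$ on this open set, and hence $\tilde q\equiv 0$ everywhere. On the half-ball $B^+=\{s_i>0\}\cap B$, however, $a_i s_i^r=\tilde q=0$, which forces $a_i=0$ because $s_i\not\equiv0$ there. For $r=0$ the same argument applies with $\sigma_0$ understood as the Heaviside function; the convention at $t=0$ is irrelevant, since it affects only a null set, or one may work pointwise off $H_i$. Since $i$ was arbitrary, all $a_i$ vanish, and then $q=f=0$.

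The main obstacle is the first step, the existence of a point of $H_i\cap\Omega$ avoiding the other hyperplanes. This is exactly where both parts of Assumption~\ref{asm:nondegeneracy} are used: the distinctness and non-antipodality of the $\theta_j$ guarantee $H_j\neq H_i$, and the condition $H_i\cap\Omega\neq\emptyset$ guarantees that $a_i$ is detected inside $\Omega$. Without the latter, $\sigma_r(s_i)$ would itself be a polynomial on $\Omega$ and the claim would fail. I will state the covering fact explicitly, via Baire's theorem or a simple measure argument within $H_i$.
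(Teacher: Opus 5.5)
Your proof is correct, and it detects the coefficient $a_i$ by a different mechanism than the paper. The paper applies the distributional derivative $D_v^{r+1}$, with $v\cdot\omega_i\neq0$ for all $i$, to the whole identity on $\Omega$. This annihilates the polynomial $p$ globally and turns each $\sigma_r(s_i)$ into $r!(v\cdot\omega_i)^{r+1}\delta(s_i)$. The claim then reduces to linear independence of the surface deltas $\delta(s_i)$, which the paper checks with test functions concentrated near a point $x_i\in(\Omega\cap H_i)\setminus\bigcup_{j\neq i}H_j$.

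You use the same localization point but replace the distributional step with an elementary one. On a small ball around $x_0$ the other ridge functions are polynomials, and the identity theorem for polynomials, applied on the two half-balls, forces $a_i=0$.

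Your route has three advantages. It needs no distributions and no auxiliary direction $v$. It justifies, via the covering argument inside $H_i$, the existence of the generic point, which the paper simply asserts. It also proves more than is stated: the only property of $q$ you use on $B$ is real-analyticity, so any combination $\sum_i a_i\sigma_r(s_i)$ that is real-analytic on $\Omega$ must vanish.

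The paper's version is more compact. It exhibits the underlying mechanism, namely the jump of the $r$-th derivative across $H_i$, in the same differentiation language as Lemma~\ref{lem:induction}.
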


\begin{proof}
    Suppose
    \begin{equation}
    \sum_{i=1}^n c_i\sigma_r(s_i)+p=0
    \qquad\text{on }\Omega,
    \end{equation}
    with $p\in\mathcal P _r(\Omega)$. Since the $\theta_i$ are normalized and satisfy $\theta_i\neq\pm\theta_j$ for $i\neq j$, the hyperplanes $H_i$ are pairwise distinct.
    
     Choose $v\in\mathbb  R  ^d$ with $v\cdot\omega_i\neq0$ for every $i$. In the distribution sense, we have
    \begin{equation}
    D_v^{r+1}p=0,
    \qquad
    D_v^{r+1}\sigma_r(s_i)=r!(v\cdot\omega_i)^{r+1}\,\delta(s_i).
    \end{equation}
    Hence
    \begin{equation}
    0=D_v^{r+1}\Bigl(\sum_{i=1}^n c_i\sigma_r(s_i)+p\Bigr)
    =r!\sum_{i=1}^n c_i(v\cdot\omega_i)^{r+1}\delta(s_i).
    \end{equation}
    Because the hyperplanes $H_i$ are distinct and each intersects $\Omega$, the distributions $\delta(s_i)$ are linearly independent on $\Omega$. Indeed, for each $i$ one may choose a point
    \begin{equation}
    x_i\in (\Omega\cap H_i)\setminus \bigcup_{j\neq i}H_j
    \end{equation}
    and then a test function supported in a sufficiently small neighborhood of $x_i$, meeting no other hyperplane. Pairing the above identity with such test functions yields
    \begin{equation}
    c_i(v\cdot\omega_i)^{r+1}=0,
    \qquad i=1,\dots,n.
    \end{equation}
    Therefore $c_i=0$ for all $i$, and then also $p=0$.
    The final statement follows by taking $p=0$.
\end{proof}

\begin{remark}
    For the de Rham sequence below, it is enough to assume linear independence of the lowest-order family
    \begin{equation}
    \{\sigma_{k-d}(s_i)\}_{i=1}^n.
    \end{equation}
    Lemma~\ref{lem:sufficient} supplies one sufficient condition, and Lemma~\ref{lem:induction} then lifts the independence to all higher orders.
\end{remark}

\section{3D grad--curl--div complex}\label{sec:3d}
Before proving the arbitrary-dimensional result, it is helpful to illustrate the result in the familiar three-dimensional grad–curl–div setting. Let $d=3$ and assume $k\ge 3$. Define
\begin{equation}
V_n^0=L_n^k,\qquad
V_n^1=(L_n^{k-1})^3,\qquad
V_n^2=(L_n^{k-2})^3,\qquad
V_n^3=L_n^{k-3}.
\end{equation}
Under the standard identification of scalar functions, vector fields, and
differential forms in three dimensions, the exterior derivative corresponds
to the operators $\operatorname{grad}$, $\operatorname{curl}$, and
$\operatorname{div}$. Thus the complex \eqref{eq:relu-complex} takes the
form
\begin{equation}\label{eq:3d-sequence}
0
\longrightarrow V_n^0
\xrightarrow{\operatorname{grad}} V_n^1
\xrightarrow{\operatorname{curl}} V_n^2
\xrightarrow{\operatorname{div}} V_n^3
\longrightarrow 0 .
\end{equation}
The goal is to show that this finite-dimensional grad--curl--div complex is
exact.

The reason for the powers $k,~k-1,~k-2,~k-3$ is already visible on a single neuron.
For constant vectors $a,b\in\mathbb  R^3$, we have
\begin{equation}
\begin{aligned}
    \operatorname{grad} \sigma_k(s_i(x))&=k\sigma_{k-1}(s_i(x))\omega_i,\\
\operatorname{curl}\bigl(\sigma_{k-1}(s_i(x))a\bigr)
&=(k-1)\sigma_{k-2}(s_i(x))(\omega_i\times a),\\
\operatorname{div}\bigl(\sigma_{k-2}(s_i(x))b\bigr)
&=(k-2)\sigma_{k-3}(s_i(x))(\omega_i\cdot b).
\end{aligned}
\end{equation}
So each derivative lowers the ReLU power by one and applies a simple algebraic map to the
constant coefficient attached to that neuron:
\begin{equation}
c\mapsto c\omega_i,\qquad
a\mapsto \omega_i\times a,\qquad
b\mapsto \omega_i\cdot b.
\end{equation}

\begin{proposition}[Exactness of the 3D complex]\label{prop:3d}
    Assume $d=3$, $k\ge3$, and that the family
    \begin{equation}
    \{\sigma_{k-3}(s_i(x))\}_{i=1}^n
    \end{equation}
    is linearly independent on $\Omega$.  Then \eqref{eq:3d-sequence} is exact.
\end{proposition}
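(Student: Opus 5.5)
By Lemma~\ref{lem:induction}, the independence of $\{\sigma_{k-3}(s_i)\}$ lifts to the families $\{\sigma_{k-2}(s_i)\}$, $\{\sigma_{k-1}(s_i)\}$ and $\{\sigma_{k}(s_i)\}$. The first consequence is that every element of $V_n^0$, $V_n^1$, $V_n^2$, $V_n^3$ has a \emph{unique} representation
\begin{equation*}
u=\sum_i \sigma_{k-p}(s_i)\,c_i ,
\end{equation*}
with coefficients $c_i\in\mathbb R$ or $c_i\in\mathbb R^3$. Apply independence componentwise for vector fields. So each $V_n^p$ is isomorphic to $\bigoplus_{i=1}^n \Lambda^p$, with $\Lambda^p$ equal to $\mathbb R,\mathbb R^3,\mathbb R^3,\mathbb R$.

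Under this isomorphism, the formulas for grad, curl and div on a single neuron show that each differential acts diagonally. On the $i$-th summand it is $(k-p)$ times the algebraic map
\begin{equation*}
c\mapsto c\,\omega_i,\qquad a\mapsto \omega_i\times a,\qquad b\mapsto \omega_i\cdot b .
\end{equation*}
These maps land in the coefficient of $\sigma_{k-p-1}(s_i)$, and the representation there is unique by the lifted independence. Exactness of \eqref{eq:3d-sequence} therefore reduces to exactness, for each fixed $i$, of
\begin{equation*}
0\to\mathbb R\xrightarrow{\ \omega_i\ }\mathbb R^3\xrightarrow{\ \omega_i\times\ }\mathbb R^3\xrightarrow{\ \omega_i\cdot\ }\mathbb R\to 0 ,
\end{equation*}
because the nonzero factors $(k-p)$ do not affect kernels or images. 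Here it matters that $k\ge 3$, so that $k-p\ge 1$ for $p\le 2$.

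Concretely, take $u\in\ker$ of an operator. Uniqueness of the representation forces each neuron coefficient of $du$ to vanish: for example, $\operatorname{curl}u=0$ gives $\omega_i\times a_i=0$ for all $i$. The pointwise algebra then produces a preimage neuron by neuron. From $\omega_i\times a_i=0$ we get $a_i=\lambda_i\omega_i$, so $u=\operatorname{grad}\sum_i \frac{\lambda_i}{k}\sigma_k(s_i)$. From $\omega_i\cdot b_i=0$ we get $b_i=\omega_i\times a_i$ with $a_i=-\omega_i\times b_i/|\omega_i|^2$. This uses the identity $\omega\times(\omega\times b)=(\omega\cdot b)\omega-|\omega|^2 b$ and the fact that $\omega_i\neq0$. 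Injectivity of grad is immediate: $c_i\omega_i=0$ gives $c_i=0$.

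Surjectivity of div also holds. Given $\sum_i c_i\sigma_{k-3}(s_i)$, take $b_i=c_i\omega_i/((k-2)|\omega_i|^2)$.

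The one delicate point is the uniqueness step. It requires independence at \emph{each} relevant ReLU order, not only the lowest one. This is exactly what Lemma~\ref{lem:induction} supplies, so I expect it to need care only in stating it precisely.
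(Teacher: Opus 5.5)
Your proposal is correct and follows essentially the same route as the paper's proof. You lift independence via Lemma~\ref{lem:induction}, extract the neuron-wise conditions $c_i\omega_i=0$, $\omega_i\times a_i=0$, $\omega_i\cdot b_i=0$, and build preimages neuron by neuron. The only differences are cosmetic: you phrase the argument as an isomorphism with a direct sum of neuron-wise Koszul complexes (as the paper does later for general $d$), and you give the explicit curl preimage $a_i=-\omega_i\times b_i/|\omega_i|^2$ where the paper only asserts that such an $a_i$ exists.
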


\begin{proof}
    By Lemma~\ref{lem:induction}, the families
    $\{\sigma_k(s_i(x))\}$, $\{\sigma_{k-1}(s_i(x))\}$, $\{\sigma_{k-2}(s_i(x))\}$, and
    $\{\sigma_{k-3}(s_i(x))\}$ are all linearly independent.
    
    First, if
    \begin{equation}
    u=\sum_i c_i\sigma_k(s_i(x))\in V_n^0
    \end{equation}
    and $\operatorname{grad} u=0$, then
    \begin{equation}
    0=k\sum_i c_i\sigma_{k-1}(s_i(x))\omega_i.
    \end{equation}
    Independence gives $c_i\omega_i=0$ for every $i$, hence $c_i=0$ since $\omega_i\neq0$. Thus the gradient map is injective.
    
    Next, let
    \begin{equation}
    v=\sum_i \sigma_{k-1}(s_i(x))a_i\in V_n^1
    \end{equation}
    and suppose $\operatorname{curl}v=0$.  Then
    \begin{equation}
    0=(k-1)\sum_i \sigma_{k-2}(s_i(x))(\omega_i\times a_i).
    \end{equation}
    Independence gives $\omega_i\times a_i=0$, so $a_i$ is parallel to $\omega_i$ and
    $a_i=\lambda_i\omega_i$ for some constant $\lambda_i$.  Therefore
    \begin{equation}
    v=\operatorname{grad}\Bigl(\sum_i \frac{\lambda_i}{k}\sigma_k(s_i(x))\Bigr).
    \end{equation}
    
    Now let
    \begin{equation}
    w=\sum_i \sigma_{k-2}(s_i(x))b_i\in V_n^2
    \end{equation}
    and suppose $\operatorname{div}w=0$.  Then
    \begin{equation}
    0=(k-2)\sum_i \sigma_{k-3}(s_i(x))(\omega_i\cdot b_i).
    \end{equation}
    Thus $\omega_i\cdot b_i=0$.  In $\mathbb  R ^3$, this means that there exists a vector $a_i$
    such that $b_i=\omega_i\times a_i$.  Hence
    \begin{equation}
    w=\operatorname{curl}\Bigl(\sum_i \frac{1}{k-1}\sigma_{k-1}(s_i(x))a_i\Bigr).
    \end{equation}
    
    Finally, any element of $V_n^3$ has the form
    \begin{equation}
    f=\sum_i c_i\sigma_{k-3}(s_i(x)).
    \end{equation}
    Choose $b_i\in\mathbb  R  ^3$ with $\omega_i\cdot b_i=c_i$, for instance
    $b_i=c_i\omega_i/|\omega_i|^2$.  Then
    \begin{equation}
    f=\operatorname{div}\Bigl(\sum_i \frac{1}{k-2}\sigma_{k-2}(s_i(x))b_i\Bigr).
    \end{equation}
    This proves exactness of \eqref{eq:3d-sequence}.
\end{proof}

The proof has a clear pattern.  Linear independence separates the different neurons.
For each individual neuron, exactness reduces to elementary linear algebra involving
$\omega_i$, namely the maps $c\mapsto c\omega_i$, $a\mapsto\omega_i\times a$, and
$b\mapsto\omega_i\cdot b$.  The arbitrary-dimensional statement below is the same argument,
with these vector identities replaced by the Koszul complex for the one-form $ds_i$.

\section{Neural de~Rham complex in arbitrary dimension}\label{sec:main}
We now show the exactness in arbitrary dimension. The following elementary result is the standard Koszul homotopy associated with exterior multiplication by a nonzero covector, see, for example, the standard contraction–exterior multiplication identities in \cite[Chapter 1]{bott1982differential}.

\begin{lemma}[Koszul homotopy]\label{lem:koszul}
Let $\lambda\in\Lambda^1$ be a nonzero covector. Choose a vector $v\in\mathbb R^d$ such that $\lambda(v)=1$. Then, for every $p$-form $\alpha\in\Lambda^p$,
\begin{equation}\label{eq:homo}
\iota_v(\lambda\wedge \alpha)+\lambda\wedge \iota_v\alpha=\alpha.
\end{equation}
Here $\iota_v:\Lambda^p\rightarrow\Lambda^{p-1}$ denotes contraction with the vector $v$, namely
\begin{equation}
(\iota_v\alpha)(w_1,\ldots,w_{p-1})=\alpha(v,w_1,\ldots,w_{p-1}).
\end{equation}
Furthermore, the algebraic Koszul complex
\begin{equation}
0\longrightarrow \Lambda^0
\xrightarrow{\lambda\wedge}
\Lambda^1
\xrightarrow{\lambda\wedge}
\cdots
\xrightarrow{\lambda\wedge}
\Lambda^d
\longrightarrow 0
\end{equation}
is exact.
\end{lemma}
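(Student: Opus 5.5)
The plan is to prove the homotopy identity \eqref{eq:homo} first, using that contraction $\iota_v$ is an antiderivation of degree $-1$ on the exterior algebra. Concretely, for a $1$-form $\lambda$ and a $p$-form $\alpha$,
\begin{equation*}
\iota_v(\lambda\wedge\alpha)=(\iota_v\lambda)\,\alpha-\lambda\wedge\iota_v\alpha=\lambda(v)\,\alpha-\lambda\wedge\iota_v\alpha .
\end{equation*}
Adding $\lambda\wedge\iota_v\alpha$ to both sides and using $\lambda(v)=1$ gives \eqref{eq:homo} directly. The vector $v$ exists because $\lambda\neq0$: take $v=\lambda^\sharp/|\lambda|^2$, where $\lambda^\sharp$ is the Euclidean dual vector. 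When $p=0$ we set $\iota_v\alpha=0$, and the identity reduces to $\iota_v(\lambda\alpha)=\alpha\lambda(v)=\alpha$. If one wants to avoid quoting the antiderivation rule, it can be checked on basis elements. Choose a basis $e_1=v,e_2,\dots,e_d$ with dual basis $\lambda=\epsilon^1,\epsilon^2,\dots,\epsilon^d$ (possible since $\lambda(v)=1$). For a monomial $\epsilon^I$, either $1\in I$ or $1\notin I$. In each case exactly one of the two terms on the left of \eqref{eq:homo} returns $\epsilon^I$ and the other vanishes.

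Exactness of the Koszul complex then follows formally. Since $\lambda\wedge\lambda=0$, the sequence is a complex. Now let $\alpha\in\Lambda^p$ with $\lambda\wedge\alpha=0$. Then \eqref{eq:homo} gives $\alpha=\lambda\wedge(\iota_v\alpha)$, so $\alpha$ lies in the image of $\lambda\wedge:\Lambda^{p-1}\to\Lambda^p$.

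The end cases need separate checks:
\begin{itemize}
\item At $\Lambda^0$ we have $\iota_v\alpha=0$, so $\lambda\wedge\alpha=0$ forces $\alpha=0$. This is injectivity.
\item At $\Lambda^d$ every $\alpha$ satisfies $\lambda\wedge\alpha=0$ for degree reasons, so $\alpha=\lambda\wedge\iota_v\alpha$. This is surjectivity of the last map.
\end{itemize}

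The only real care point is the sign convention in the antiderivation formula for $\iota_v$. The sign $(-1)^{\deg\lambda}=-1$ is exactly what makes the two terms combine into $\alpha$. I would settle this with the basis computation above rather than relying on a convention from the literature.
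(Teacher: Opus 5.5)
Your proof is correct and follows the paper's argument essentially verbatim. You use the contraction--wedge (antiderivation) identity with $\lambda(v)=1$ to get the homotopy, and then derive exactness in the middle degrees, injectivity at $\Lambda^0$, and surjectivity onto $\Lambda^d$ from it just as the paper does, with your basis check ($e_1=v$, $\epsilon^1=\lambda$) usefully pinning down the sign that the paper simply quotes as standard.
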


\begin{proof}
Since $\lambda\neq 0$, there exists a vector $v\in\mathbb R^d$ such that $\lambda(v)\neq 0$. After rescaling $v$, we may assume that $\lambda(v)=1$. We use the standard contraction–wedge identity
\begin{equation}
\iota_v(\eta\wedge \alpha)=\eta(v)\alpha-\eta\wedge \iota_v\alpha,
\qquad \eta\in\Lambda^1,\quad \alpha\in\Lambda^p.
\end{equation}
Taking $\eta=\lambda$, we obtain
\begin{equation}
\iota_v(\lambda\wedge \alpha)+\lambda\wedge \iota_v\alpha=\alpha.
\end{equation}
We now prove exactness. Suppose first that $1\le p\le d-1$ and that $\alpha\in\Lambda^p$ satisfies
\begin{equation}
\lambda\wedge\alpha=0.
\end{equation}
Applying the identity \eqref{eq:homo} above gives
\begin{equation}
\alpha=\lambda\wedge \iota_v\alpha.
\end{equation}
Thus every element in the kernel of $\lambda\wedge:\Lambda^p\to\Lambda^{p+1}$ lies in the image of $\lambda\wedge:\Lambda^{p-1}\to\Lambda^p$. This proves exactness in the middle degrees.

At degree $0$, if $c\in\Lambda^0=\mathbb R$ and
\begin{equation}
\lambda\wedge c=0,
\end{equation}
then $c\lambda=0$. Since $\lambda\neq0$, we have $c=0$. Hence the first map is injective.

At the top degree, every $\beta\in\Lambda^d$ satisfies
\begin{equation}
\lambda\wedge\beta=0
\end{equation}
since there are no nonzero $(d+1)$-forms on $\mathbb R^d$. Applying the homotopy identity \eqref{eq:homo} to $\beta$ gives
\begin{equation}
\beta=\lambda\wedge\iota_v\beta.
\end{equation}
Thus every top-degree form lies in the image of $\lambda\wedge:\Lambda^{d-1}\to\Lambda^d$. Therefore the complex is exact in all degrees.
\end{proof}

Using the Koszul homotopy lemma, we now prove the exactness of the neural de Rham complex.
\begin{theorem}[Exact ReLU$^k$ neural de Rham complex]\label{thm:exact}
    Assume $k\ge d$ and that the family
    \begin{equation}
    \{\sigma_{k-d}(s_i)\}_{i=1}^n
    \end{equation}
    is linearly independent on $\Omega$, then the complex
    \begin{equation}\label{eq:derham}
        0\longrightarrow\mathcal C ^0 \xrightarrow{d} \mathcal C ^1 \xrightarrow{d}
        \cdots \xrightarrow{d} \mathcal C ^d \longrightarrow 0
    \end{equation}
    is exact.
\end{theorem}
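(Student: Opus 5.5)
The plan is to copy the structure of the proof of Proposition~\ref{prop:3d}, with the vector identities replaced by Lemma~\ref{lem:koszul} applied to $\lambda_i:=ds_i$. The steps are as follows.

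\textbf{Step 1: independence in every degree.} Lemma~\ref{lem:induction} applied with $r=k-d$ shows that each family $\{\sigma_{k-p}(s_i)\}_{i=1}^n$, $0\le p\le d$, is linearly independent on $\Omega$. Expanding a $p$-form in the basis $dx_I$ then shows that
\[
\sum_i \sigma_{k-p}(s_i)\,\beta_i=0 \quad\text{with } \beta_i\in\Lambda^p
\qquad\Longrightarrow\qquad \beta_i=0 \text{ for all } i .
\]
This holds because the coefficient of each $dx_I$ is the scalar combination $\sum_i(\beta_i)_I\,\sigma_{k-p}(s_i)$, which must vanish. Consequently every element of $\mathcal C^p$ has unique neuron-wise coefficients $\alpha_i\in\Lambda^p$, and $\mathcal C^p\cong\bigoplus_{i=1}^n\Lambda^p$.

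\textbf{Step 2: neuron-wise reduction.} Take $u=\sum_i\sigma_{k-p}(s_i)\alpha_i\in\mathcal C^p$. By Proposition~\ref{prop:inclusion},
\[
du=\sum_i (k-p)\,\sigma_{k-p-1}(s_i)\,ds_i\wedge\alpha_i .
\]
Suppose $du=0$ with $p\le d-1$. Step 1 applied in degree $p+1$ gives $(k-p)\,ds_i\wedge\alpha_i=0$. Since $k\ge d>p$, the factor $k-p$ is nonzero, so $ds_i\wedge\alpha_i=0$ for every $i$.

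\textbf{Step 3: apply Lemma~\ref{lem:koszul} to each neuron.} Since $\omega_i\neq0$, the covector $ds_i$ is nonzero, and we may choose $v_i$ with $ds_i(v_i)=1$ (for instance $v_i=\omega_i/|\omega_i|^2$).
\begin{itemize}[leftmargin=*]
\item For $p=0$, $ds_i\wedge\alpha_i=0$ forces $\alpha_i=0$. Hence $d$ is injective on $\mathcal C^0$.
\item For $1\le p\le d-1$, the homotopy identity \eqref{eq:homo} gives $\alpha_i=ds_i\wedge\iota_{v_i}\alpha_i$. Then
\[
u=d\Bigl(\sum_i \tfrac{1}{k-p+1}\,\sigma_{k-p+1}(s_i)\,\iota_{v_i}\alpha_i\Bigr),
\]
and the form inside the bracket lies in $\mathcal C^{p-1}$. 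The division is legitimate because $k-p+1\ge1$.
\item For $p=d$, every $\alpha_i$ satisfies $\alpha_i=ds_i\wedge\iota_{v_i}\alpha_i$. The same formula then shows that $d:\mathcal C^{d-1}\to\mathcal C^d$ is onto.
\end{itemize}
Together with the complex property already established after Proposition~\ref{prop:inclusion}, this gives exactness in every degree.

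\textbf{Main obstacle.} The only delicate point is Step 1. It must justify that linear independence of the scalar family on $\Omega$ yields uniqueness of the form-valued coefficients in every degree, including the lowest degree $k-d$. One must also check that all multiplicative constants $k-p$ and $k-p+1$ are nonzero; this is exactly where the hypothesis $k\ge d$ is used. Once uniqueness of the neuron-wise coefficients is established, the rest is the purely algebraic Koszul argument of Lemma~\ref{lem:koszul}, applied one neuron at a time.
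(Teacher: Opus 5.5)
Your proposal is correct and follows essentially the same route as the paper's proof: you lift independence to every degree via Lemma~\ref{lem:induction}, separate the neurons coefficient-wise, and then apply the Koszul homotopy of Lemma~\ref{lem:koszul} neuron by neuron for injectivity at $\mathcal C^0$, exactness in the middle degrees, and surjectivity onto $\mathcal C^d$. Your expansion in the $dx_I$ basis, which justifies uniqueness of the form-valued coefficients, and your check that $k-p\neq 0$ make explicit two details the paper leaves implicit.
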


\begin{proof}
By Lemma~\ref{lem:induction}, for every $0\le p\le d$, the family
\begin{equation}
\{\sigma_{k-p}(s_i)\}_{i=1}^n
\end{equation}
is linearly independent on $\Omega$. Hence, every element of $\mathcal C^p$ has a unique representation
\begin{equation}
\nu=\sum_{i=1}^n
\sigma_{k-p}(s_i)\alpha_i,
\qquad
\alpha_i\in\Lambda^p.
\end{equation}
We first prove exactness at $\mathcal C^0$. Assume that $ d\nu=0$, Proposition~\ref{prop:inclusion} gives
\begin{equation}
0= d\nu =k\sum_{i=1}^n
\alpha_i\sigma_{k-1}(s_i)d s_i.
\end{equation}
By the linear independence of $\{\sigma_{k-1}(s_i)\}_{i=1}^n$ and the fact that the coefficient one-forms are constant, we obtain
\begin{equation}
\alpha_i d s_i=0,
\qquad i=1,\ldots,n.
\end{equation}
Since $ d s_i\neq0$, it follows that $\alpha_i=0$ for all $i$. Therefore
\begin{equation}
\nu=0.
\end{equation}
Thus
\begin{equation}
\ker( d:\mathcal C^0\to\mathcal C^1)=0,
\end{equation}
which is exactness at $\mathcal C^0$.

Now let $1\le p\le d-1$, and suppose
\begin{equation}
\nu=\sum_{i=1}^n \sigma_{k-p}(s_i)\alpha_i \in \mathcal C ^p,
\qquad d\nu=0.
\end{equation}
Again by Proposition~\ref{prop:inclusion},
\begin{equation}
0=d\nu=(k-p)\sum_{i=1}^n \sigma_{k-p-1}(s_i)\,ds_i\wedge\alpha_i.
\end{equation}
By independence of $\{\sigma_{k-p-1}(s_i)\}$,
\begin{equation}
ds_i\wedge\alpha_i=0,
\qquad i=1,\dots,n.
\end{equation}
Choose $v_i\in\mathbb  R  ^d$ such that $ds_i(v_i)=1$. Lemma~\ref{lem:koszul} gives
\begin{equation}
\alpha_i=ds_i\wedge \iota_{v_i}\alpha_i.
\end{equation}
Therefore
\begin{equation}
\nu
=\sum_{i=1}^n \sigma_{k-p}(s_i)\,ds_i\wedge \iota_{v_i}\alpha_i
=d\Bigl(\sum_{i=1}^n \frac{1}{k-p+1}\sigma_{k-p+1}(s_i)\,\iota_{v_i}\alpha_i\Bigr).
\end{equation}
Hence $\ker(d:\mathcal C ^p\to\mathcal C ^{p+1})\subset \operatorname{im}(d:\mathcal C ^{p-1}\to\mathcal C ^p)$.
The reverse inclusion is automatic because $d^2=0$.

Finally, let
\begin{equation}
\nu=\sum_{i=1}^n \sigma_{k-d}(s_i)\alpha_i\in\mathcal C ^d,
\qquad \alpha_i\in\Lambda^d.
\end{equation}
Choose $v_i$ with $ds_i(v_i)=1$. Since $\alpha_i$ has top degree, $ds_i\wedge\alpha_i=0$, so Lemma~\ref{lem:koszul} yields
\begin{equation}
\alpha_i=ds_i\wedge\iota_{v_i}\alpha_i.
\end{equation}
Hence
\begin{equation}
\nu
=d\Bigl(\sum_{i=1}^n \frac{1}{k-d+1}\sigma_{k-d+1}(s_i)\,\iota_{v_i}\alpha_i\Bigr),
\end{equation}
which proves surjectivity onto $\mathcal C ^d$.
Thus the whole complex \eqref{eq:derham} is exact.
\end{proof}

\begin{remark}[Relation with the neuron-wise Koszul complex]
    For each fixed neuron $i$, the exactness mechanism is precisely the Koszul complex generated by the nonzero one-form $ds_i$:
    \begin{equation}
    0\to \Lambda^0 \xrightarrow{ds_i\wedge}\Lambda^1\xrightarrow{ds_i\wedge}\cdots
    \xrightarrow{ds_i\wedge}\Lambda^d\to 0.
    \end{equation}
    The scalar independence hypothesis ensures that the different neurons do not interfere with one another and allows the global complex to split coefficient-wise.
\end{remark}

\begin{remark}[Extension to BGG complexes]
The ReLU de~Rham complex constructed above can also provide a natural starting point for Bernstein--Gelfand--Gelfand (BGG) type constructions \cite{arnold2021complexes}. In the BGG framework, one begins with a de~Rham complex, combines the exterior derivative with an algebraic differential, and then passes to suitable kernels or quotient spaces to obtain derived complexes, such as elasticity complexes. The construction and exactness proof developed here suggest a possible route toward ReLU analogues of BGG-derived complexes based on this mechanism.
\end{remark}

\section{Numerical experiments}\label{sec:numerical}
In this section, we present numerical experiments for Galerkin discretizations built from the proposed neural de~Rham complex.  The purpose of these tests is twofold.  First, we examine whether the compatible neural spaces provide stable discrete pairs in mixed formulations.  Second, we test whether the approximation behavior predicted by the underlying linearized ReLU$^k$ spaces is reflected in the numerical errors for representative PDE problems.  The experiments are intended as numerical evidence for the compatibility and approximation properties of the proposed spaces.

\subsection{Mixed formulation of Poisson problem}
\label{subsec:mixed-poisson}

Let \(\Omega=(0,1)^2\).  We consider the  Poisson problem in a mixed formulation
\begin{equation}
\label{eq:mixed-poisson-strong}
    \begin{aligned}
       \boldsymbol{\sigma}-\nabla u &=0
       &&\text{in }\Omega,\\
       -\operatorname{div}\boldsymbol{\sigma} &= f
       &&\text{in }\Omega,\\
       u &=0
       &&\text{on }\partial\Omega .
    \end{aligned}
\end{equation}
The homogeneous Dirichlet condition on \(u\) is natural in the mixed formulation below. We use the solution
\[
    u(x,y)=\sin(\pi x)\sin(\pi y).
\]
The mixed weak formulation is: find
\[
    (\boldsymbol{\sigma},u)\in H(\operatorname{div};\Omega)\times L^2(\Omega)
\]
such that
\begin{equation}
\label{eq:mixed-poisson-weak}
\begin{aligned}
    (\boldsymbol{\sigma},\boldsymbol{\tau})_{\Omega}
    +(u,\operatorname{div}\boldsymbol{\tau})_{\Omega}&=0,
    \qquad
    &&\forall \boldsymbol{\tau}\in H(\operatorname{div};\Omega),\\
    (\operatorname{div}\boldsymbol{\sigma},v)_{\Omega}
    &=-(f,v)_{\Omega},
    \qquad
    &&\forall v\in L^2(\Omega).
    \end{aligned}
\end{equation}
For an integer \(k\ge 1\), we use the neural spaces
\[
    \Sigma_{n,k}=(L_n^k)^2,
    \qquad
    W_{n,k}=L_n^{k-1}.
\]
This pair is the \(H(\operatorname{div})\)-\(L^2\) part of the two-dimensional neural de Rham complex. The discrete problem is: find
\[
    (\boldsymbol{\sigma}_{n},u_n)\in \Sigma_{n,k}\times W_{n,k}
\]
such that
\begin{equation}
\label{eq:mixed-poisson-discrete}
\begin{aligned}
    (\boldsymbol{\sigma}_{n},\boldsymbol{\tau}_{n})_{\Omega}
    +(u_n,\operatorname{div}\boldsymbol{\tau}_{n})_{\Omega}&=0,
    \qquad
    &&\forall \boldsymbol{\tau}_{n}\in \Sigma_{n,k},\\
    (\operatorname{div}\boldsymbol{\sigma}_{n},v_n)_{\Omega}
    &=-(f,v_n)_{\Omega},
    \qquad
    &&\forall v_n\in W_{n,k}.
        \end{aligned}
\end{equation}
We report the cases \(k=1\) and \(k=2\).

For smooth solutions and quasi-uniform neuron parameters excluding antipodal pairs, the approximation theory for linearized ReLU$^k$ spaces suggests the rates
\[
\begin{aligned}
    \|\boldsymbol{\sigma}-\boldsymbol{\sigma}_{n}\|_{H(\operatorname{div};\Omega)}
    &=
    O\left(n^{-\frac12-\frac{2k-1}{2d}}\right),\\
    \|\boldsymbol{\sigma}-\boldsymbol{\sigma}_{n}\|_{L^2(\Omega)}
    &=
    O\left(n^{-\frac12-\frac{2k+1}{2d}}\right),\\
    \|u-u_n\|_{L^2(\Omega)}
    &=
    O\left(n^{-\frac12-\frac{2k-1}{2d}}\right),
    \end{aligned}
\]
with \(d=2\) in the present experiment. We also compute the discrete inf-sup constant
\begin{equation}
\label{eq:discrete-infsup}
    \beta_{n,k}
    =
    \inf_{0\neq v_n\in W_{n,k}}
    \sup_{0\neq \boldsymbol{\tau}_n\in \Sigma_{n,k}}
    \frac{
    (\operatorname{div}\boldsymbol{\tau}_n,v_n)_{\Omega}
    }{
    \|\boldsymbol{\tau}_n\|_{H(\operatorname{div};\Omega)}
    \|v_n\|_{L^2(\Omega)}
    } .
\end{equation}
This quantity measures the stability of the discrete divergence pairing.  In particular, exactness of the neural complex implies the algebraic surjectivity of the discrete divergence map, while the behavior of \(\beta_{n,k}\) gives numerical information about the strength of this stability. A uniform lower bound for $\beta_{n,k}$ would require an additional stability estimate, such as a uniformly bounded right inverse of the discrete divergence operator or a bounded commuting projection. Although this is not proved here, the numerical evidence in Table~\ref{tab:mixpoissoninfsup} shows that these values remain bounded away from zero as $n$ increases, which is consistent with the surjectivity of the discrete divergence operator.

\begin{table}[!htbp]
\centering
\setlength{\tabcolsep}{1cm}
\caption{Discrete $H(\operatorname{div})$--$L^2$ inf-sup constants $\beta_{n,k}$.}
\label{tab:mixpoissoninfsup}
\begin{tabular}{c|cc}
\hline
$n$ & $k=1$ & $k=2$ \\
\hline
16 & 9.753e-01 & 4.346e-01 \\
32 & 9.665e-01 & 9.639e-01 \\
64 & 9.759e-01 & 9.756e-01 \\
128 & 9.757e-01 & 9.756e-01 \\
256 & 9.756e-01 & 9.756e-01 \\
\hline
\end{tabular}
\end{table}

Figure~\ref{fig:mixed-poisson} reports the errors for the pairs $(L_n^1)^2\times L_n^0$ and $(L_n^2)^2\times L_n^1 $. The observed convergence is consistent with the expected approximation behavior of the underlying linearized ReLU spaces.

\begin{figure}[!htbp]
    \centering
    \includegraphics[width=0.45\textwidth]{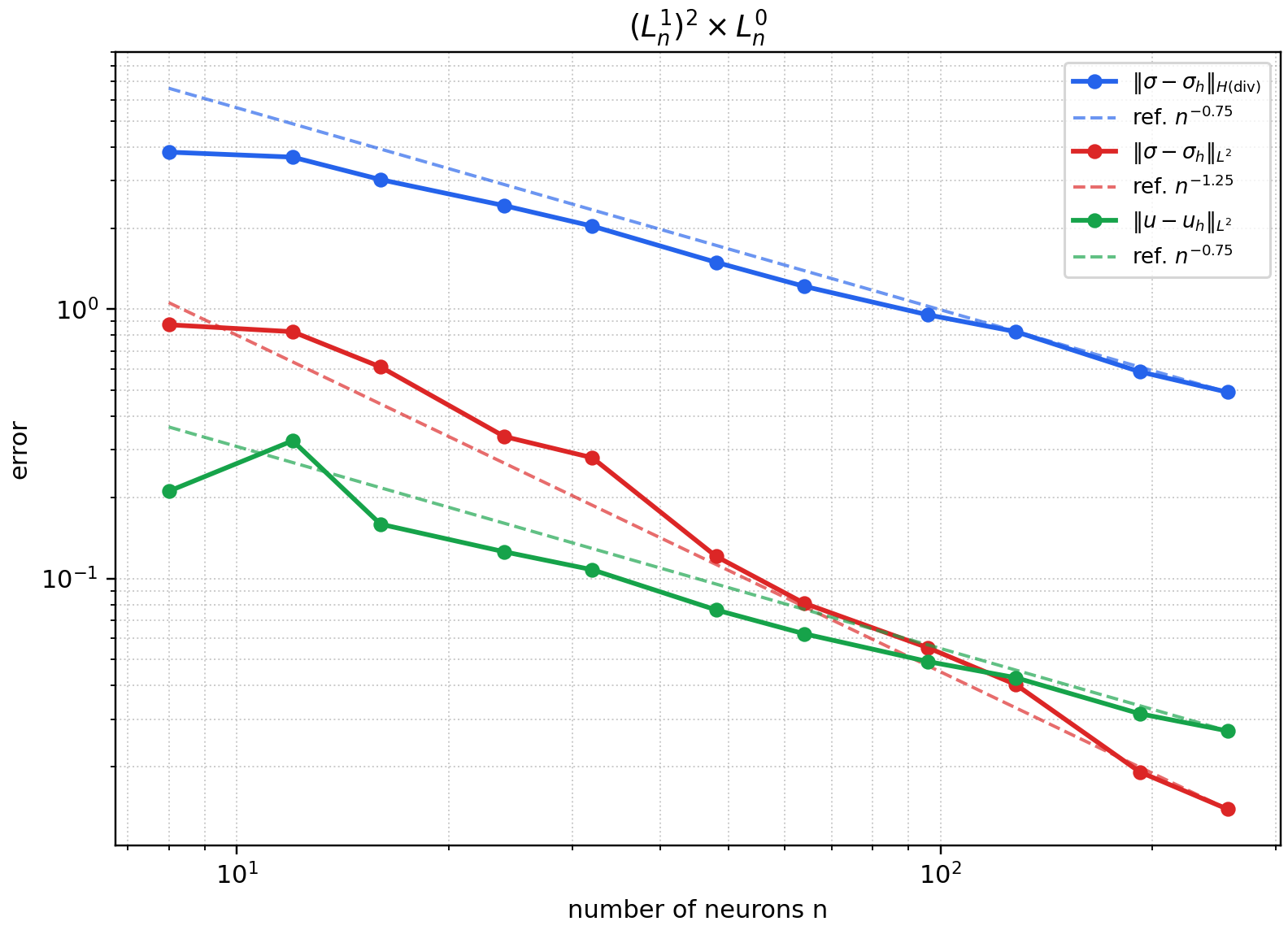}
    \includegraphics[width=0.45\textwidth]{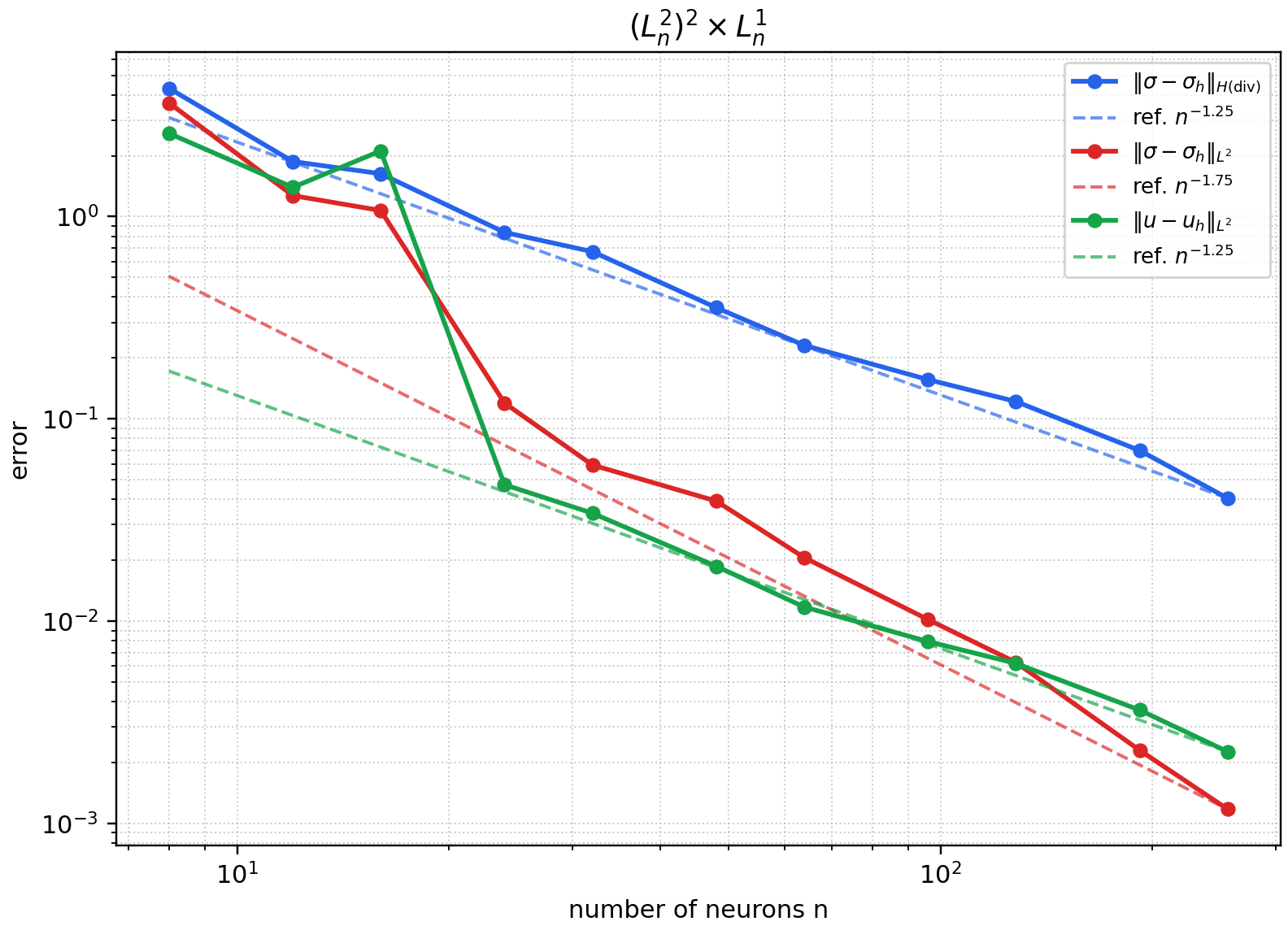}
    \caption{Errors of mixed formulation of Poisson problem for \((L_n^1)^2\times L_n^0\) (left) and
    \((L_n^2)^2\times L_n^1\) (right).}
    \label{fig:mixed-poisson}
\end{figure}

\subsection{Hodge Laplacian problem}\label{subsec:hodge-laplacian}
We next test the three-dimensional Hodge Laplacian problem for one-forms on the unit cube $\Omega=(0,1)^3$
\begin{equation}
\label{eq:hodge-strong}
    -\nabla\nabla\cdot\boldsymbol u
    +
    \nabla\times\nabla\times \boldsymbol u
    =
    \boldsymbol f
    \qquad\text{in }\Omega ,
\end{equation}
with boundary conditions
\begin{equation}
\label{eq:hodge-natural-bc}
    \boldsymbol u\cdot n=0,
    \qquad
    n\times(\nabla\times\boldsymbol u)=0
    \qquad\text{on }\partial\Omega .
\end{equation}
Introducing
\[
    \sigma=\delta \boldsymbol u=-\operatorname{div}\boldsymbol u,
\]
the mixed formulation is: find
\[
    (\sigma,\boldsymbol u)\in H^1(\Omega)\times H(\operatorname{curl};\Omega)
\]
such that
\begin{equation}
\label{eq:hodge-mixed}
\begin{aligned}
    (\sigma,\tau)_\Omega
    -
    (\boldsymbol u,\nabla\tau)_\Omega
    &=0,
    \qquad
    &&\forall \tau\in H^1(\Omega),\\
    (\nabla\sigma,\boldsymbol v)_\Omega
    +
    (\nabla\times\boldsymbol u,\nabla\times\boldsymbol v)_\Omega
    &=
    (\boldsymbol f,\boldsymbol v)_\Omega,
    \qquad
    &&\forall \boldsymbol v\in H(\operatorname{curl};\Omega).
\end{aligned}
\end{equation}
For the natural boundary conditions used in the Hodge Laplacian problem, the
degree-zero cohomology contains the constant functions. To represent this
component correctly at the discrete level, we augment the degree-zero neural
space by constants. Thus, in the three-dimensional neural de~Rham sequence,
we use the discrete spaces
\[
    \sigma_n\in \mathbb R\oplus L_n^k,
    \qquad
    \boldsymbol u_n\in (L_n^{k-1})^3 ,
\]
for $k=2,3$.

We use the manufactured solution $\bm u=(u_1,u_2,u_3)^T$ with
\[
\begin{aligned}
    u_1(x,y,z)
    &=
    \sin(\pi x)\cos(\pi y)\cos(\pi z),\\
    u_2(x,y,z)
    &=
    \frac12\cos(\pi x)\sin(\pi y)\cos(\pi z),\\
    u_3(x,y,z)
    &=
    \frac14\cos(\pi x)\cos(\pi y)\sin(\pi z).
\end{aligned}
\]
We report the $L^2$ and graph-norm errors for both $\sigma$ and $\bm u$. Figure~\ref{fig:hodge-3d-one-form} shows the convergence history.  The errors decrease consistently as the number of neurons increases, consistent with the optimal convergence rate.  This experiment illustrates that the proposed neural complex can be used in a mixed Hodge Laplacian formulation. 

\begin{figure}[!htbp]
    \centering
    \includegraphics[width=0.45\textwidth]{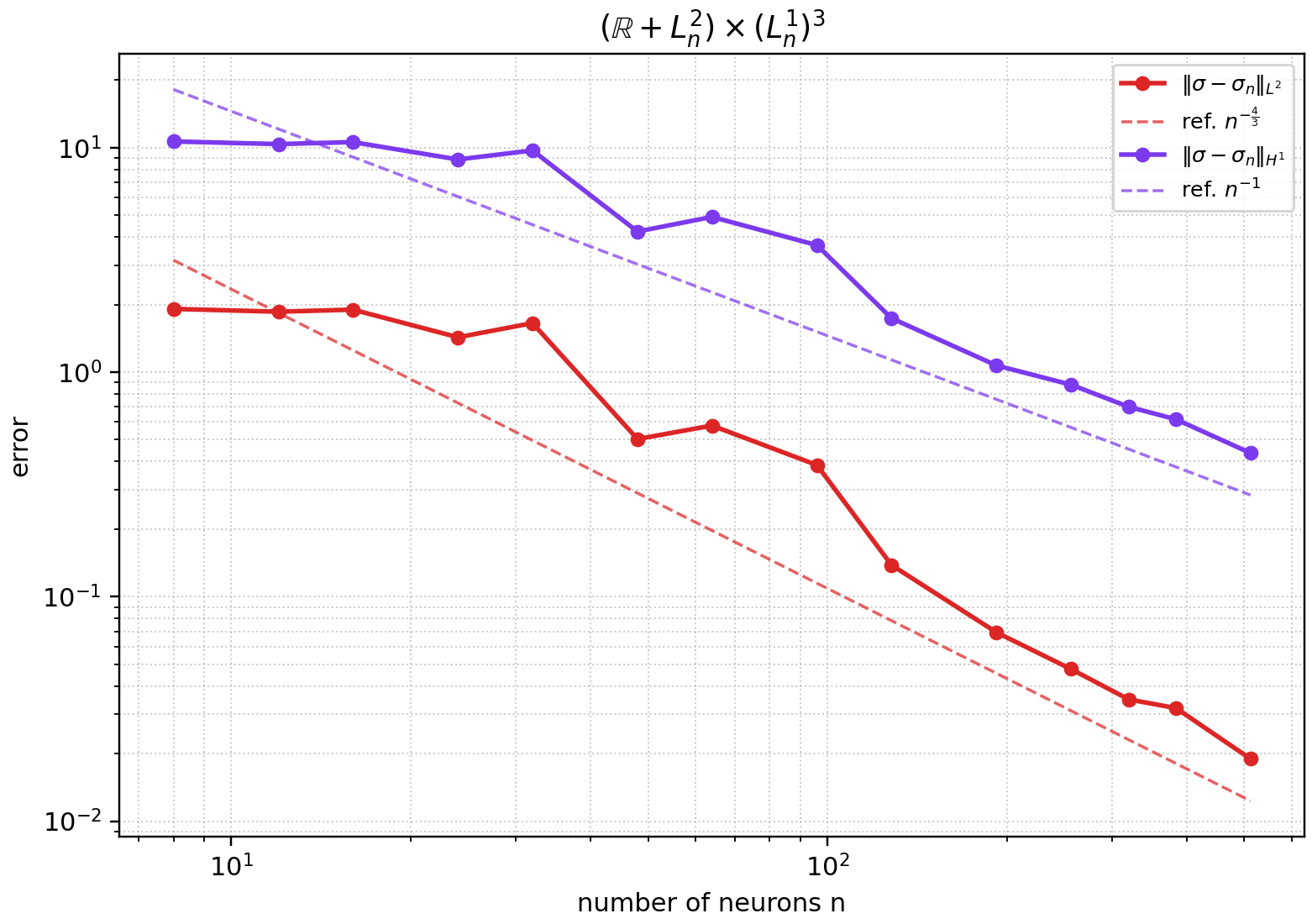}
    \includegraphics[width=0.45\textwidth]{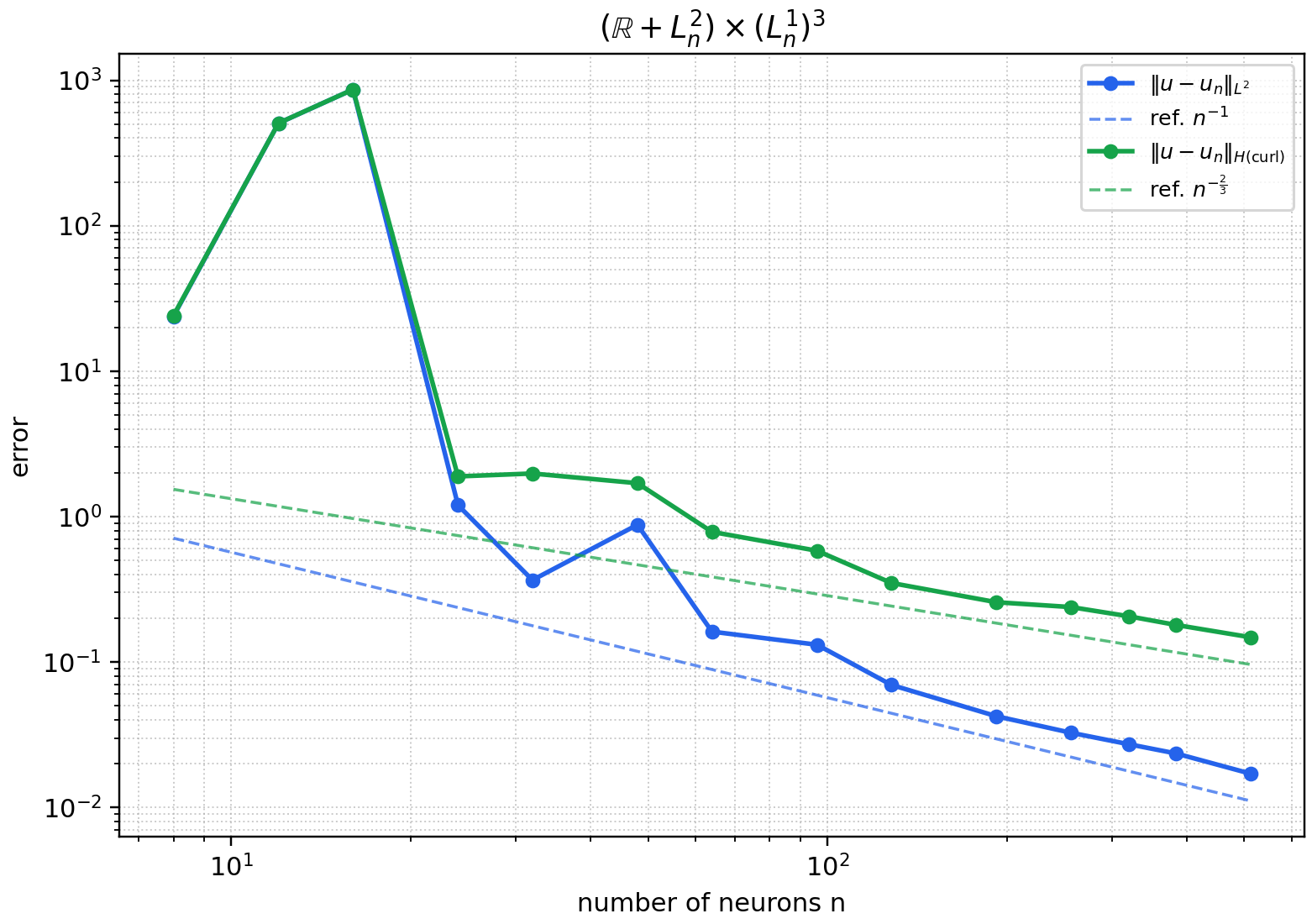}\\
     \includegraphics[width=0.45\textwidth]{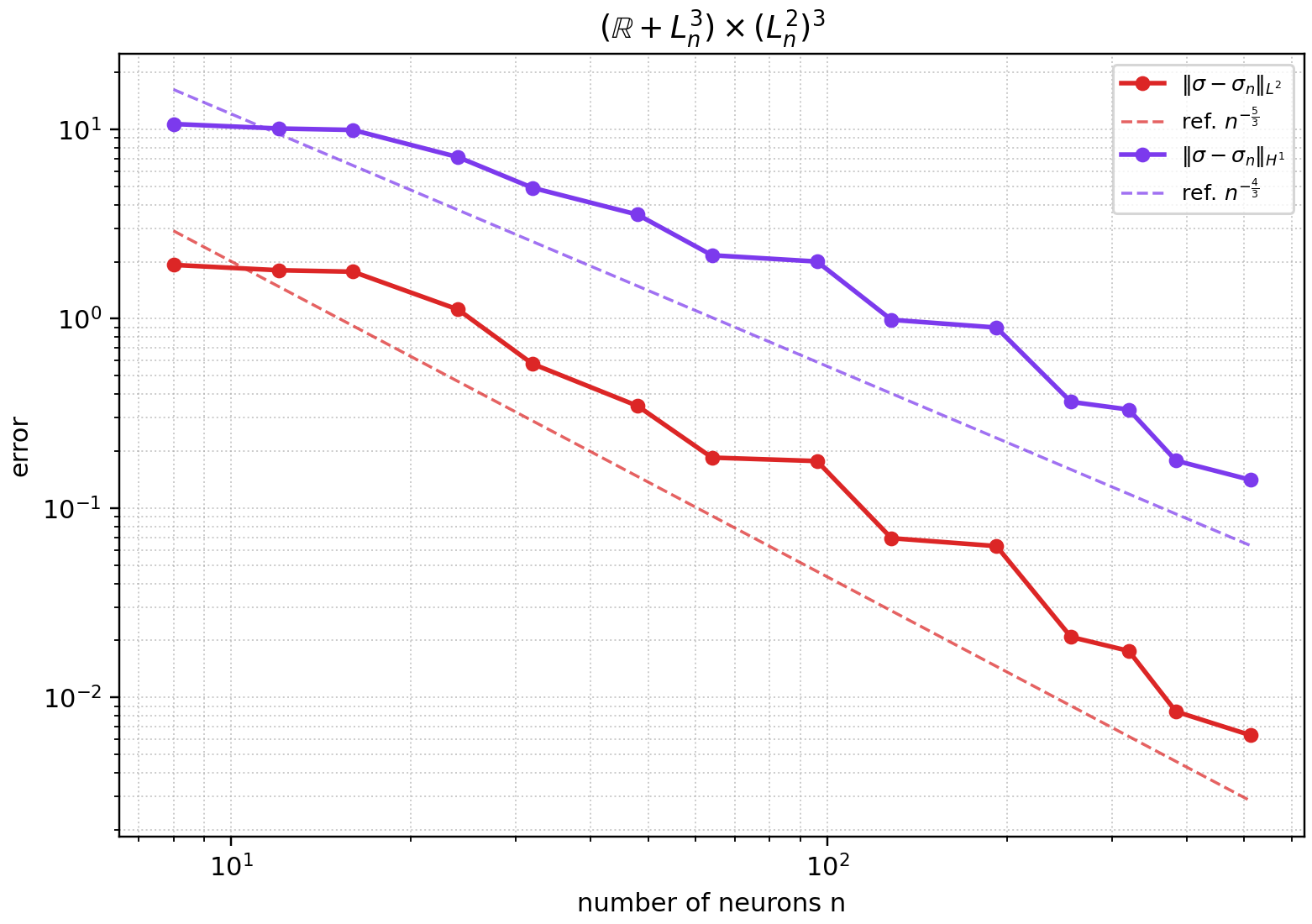}
    \includegraphics[width=0.45\textwidth]{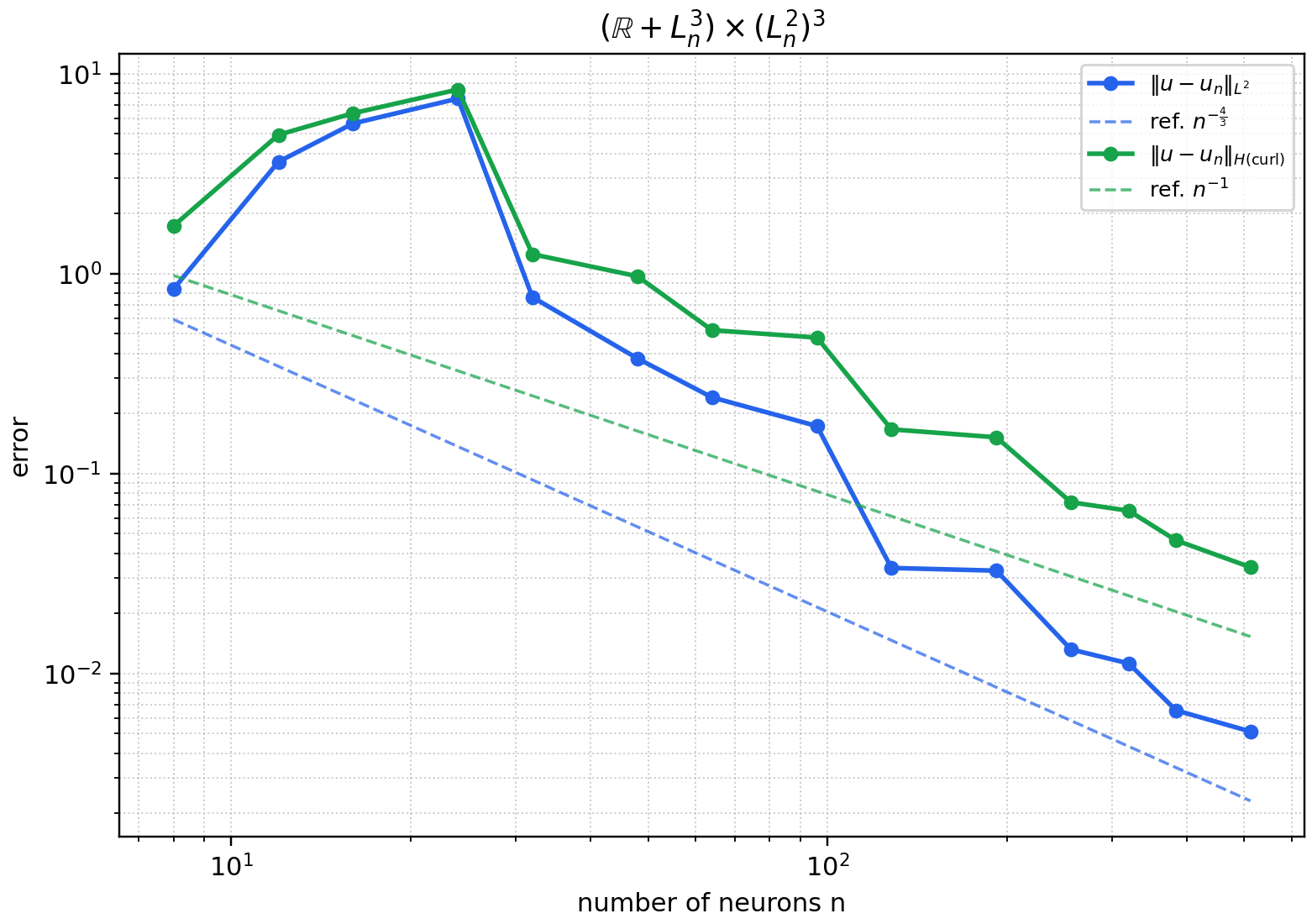}
    \caption{Three-dimensional one-form Hodge Laplacian errors for
    \((\mathbb R\oplus L_n^2)\times (L_n^1)^3\) (top) and \((\mathbb R\oplus L_n^3)\times (L_n^2)^3\) (bottom).  The left column shows the
    \(\sigma\)-errors in \(L^2\) and \(H^1\), and the right column shows the
    \(\boldsymbol u\)-errors in \(L^2\) and \(H(\operatorname{curl})\).}
    \label{fig:hodge-3d-one-form}
\end{figure}

\subsection{Hodge Laplacian eigenvalue problem on an L-shaped domain}
\label{subsec:lshape-hodge-laplacian-eigenvalue}

We consider the Hodge Laplacian eigenvalue problem on the L-shaped domain
\[
\Omega=(-1,1)^2\setminus\bigl((0,1)\times(-1,0)\bigr).
\]
For a two-dimensional one-form, identified with a vector field
\(\boldsymbol u=(u_1,u_2)\), the eigenvalue problem can be written as
\[
-\nabla\operatorname{div}\boldsymbol u
+
\operatorname{curl}\operatorname{curl}\boldsymbol u
=
\lambda \boldsymbol u ,
\]
with boundary conditions $\boldsymbol u\cdot\boldsymbol n=0$ and
$\operatorname{curl}\boldsymbol u=0$ on $\partial\Omega$. Here the two curl operators are different: the inner one is the scalar
curl of a vector field,
$\operatorname{curl}\boldsymbol u=\partial_1u_2-\partial_2u_1$, and the
outer one is the vector curl of a scalar,
$\operatorname{curl}\phi=(\partial_2\phi,-\partial_1\phi)^T$, and accordingly
$H(\operatorname{curl};\Omega)=\{\boldsymbol v\in L^2(\Omega)^2:
\operatorname{curl}\boldsymbol v\in L^2(\Omega)\}$. The re-entrant corner of the L-shaped domain gives rise to singular eigenfunctions with limited Sobolev regularity.  In particular, the eigenfunction associated with the first eigenvalue belongs only to $H^{2/3-\varepsilon}(\Omega)$ for every $\varepsilon>0$.
For the linearized ReLU$^k$ spaces with quasi-uniform neurons, the
approximation of such a field in the $H(\operatorname{curl})$ norm is
therefore limited by the regularity, and one
expects at most the rate $n^{-1/3+\varepsilon}$ in terms of the number of
neurons $n$.  

We compare two variational discretizations: a primal formulation and a
mixed formulation, while the former is prone to spurious modes, whereas the latter, built on the proposed ReLU$^k$ neural complex, reproduces the spectrum correctly.

\textit{Primal formulation.} The primal formulation is: find
\((\lambda,\boldsymbol u)\in\mathbb R\times V\), where
\[
  V=
  \left\{
  \boldsymbol v\in H(\operatorname{curl};\Omega)\cap
  H(\operatorname{div};\Omega):
  \boldsymbol v\cdot \boldsymbol n=0
  \text{ on }\partial\Omega
  \right\},
\]
such that
\[
  (\operatorname{curl}\boldsymbol u,\operatorname{curl}\boldsymbol v)
  +
  (\operatorname{div}\boldsymbol u,\operatorname{div}\boldsymbol v)
  =
  \lambda(\boldsymbol u,\boldsymbol v),
  \qquad \forall \boldsymbol v\in V .
\]
We use the neural trial space
\[
V_n^k=(L_n^k)^2 .
\]
The normal trace condition is imposed strongly. More precisely, on each
boundary edge, the restriction of every ridge function is a piecewise
polynomial. We subdivide each edge at the ridge breakpoints and form the
exact trace matrix \(T\) by requiring all polynomial coefficients of
\(\boldsymbol v_n\cdot\boldsymbol n\) to vanish on each subinterval. The
actual primal trial space is therefore
\[
  V_{n,0}^k
  =
  \{\boldsymbol v_n\in (L_n^k)^2:T\boldsymbol v_n=0\}.
\]

\textit{Mixed formulation.}
The mixed formulation is: find
\[
(\lambda,\sigma,\boldsymbol u)\in
\mathbb R\times H^1(\Omega)\times H(\operatorname{curl};\Omega)
\]
such that
\[
\begin{aligned}
  (\sigma,\tau)-(\boldsymbol u,\nabla\tau)&=0,
  \qquad &&\forall \tau\in H^1(\Omega),\\
  (\nabla\sigma,\boldsymbol v)
  +
  (\operatorname{curl}\boldsymbol u,\operatorname{curl}\boldsymbol v)
  &=
  \lambda(\boldsymbol u,\boldsymbol v),
  \qquad &&\forall \boldsymbol v\in H(\operatorname{curl};\Omega).
  \end{aligned}
\]
For the same reason, we augment the
degree-zero neural space by constants and use the discrete sequence
\[
  \mathbb R\oplus L_n^{k+1}
  \xrightarrow{\nabla}
  (L_n^k)^2 .
\]

By the eigenvalue approximation theory for mixed
formulations~\cite{BABUSKA1991641,boffi2023convergence}, the
eigenvalue error converges at twice the eigenfunction rate. For the
singular eigenvalues this yields
\[
|\lambda_j-\lambda_{j,n}|
\ \lesssim\
n^{-2/3+\varepsilon} .
\]
For eigenvalues with smooth eigenfunctions the error is instead limited
by the approximation power of the neural spaces: the mixed eigenvalue
error doubles the optimal $L^2$ approximation rate
$n^{-1/2-(2k+1)/(2d)}$ of $(L_n^k)^2$, giving $n^{-(k+3/2)}$ in $d=2$,
whereas the primal eigenvalue error doubles the energy-norm rate
$n^{-1/2-(2k-1)/(2d)}$, giving $n^{-(k+1/2)}$.  We therefore use
$n^{-2/3}$ as the reference slope for the eigenvalues associated with
singular eigenfunctions, and $n^{-(k+3/2)}$ (mixed) and $n^{-(k+1/2)}$
(primal) for the smooth ones.

Figure~\ref{fig:lshape-hodge-eigenvalue} compares the computed spectra of
the mixed and primal formulations for \(k=1,2\) as the number of neurons
increases.  The red dashed lines indicate the reference continuous
eigenvalues, and the blue markers the computed discrete eigenvalues.  The
mixed formulation based on the neural complex tracks the reference
spectrum without spurious modes.  By contrast, the primal formulation
produces eigenvalues lying in spectral gaps and, in particular, has no
discrete eigenvalue near the singular eigenvalue
\(\lambda_1=1.4756\).

Figure~\ref{fig:lshape-hodge-lambda1-rate} shows the convergence histories
for a singular and a smooth eigenvalue for \(k=1,2\).  For the mixed
formulation the observed rates are close to the corresponding reference
slopes: \(n^{-2/3}\) for the singular mode caused by the re-entrant
corner, and \(n^{-k-3/2}\) for the smooth mode.  For the primal
formulation, since no discrete eigenvalue approaches \(\lambda_1\), the
quantity reported for the singular mode is the distance from \(\lambda_1\) to the discrete spectrum; it stagnates at \(O(1)\),
quantifying the missing mode. For the smooth eigenvalue the primal method does
converge, but one order slower than the mixed method, at the rate
$n^{-(k+1/2)}$ governed by the energy norm, in agreement with the
reference slopes in Figure~\ref{fig:lshape-hodge-lambda1-rate}. Together with the spectral
plots in Figure~\ref{fig:lshape-hodge-eigenvalue}, these results indicate that the mixed discretization based on the proposed neural complex tracks the correct spectrum, resolves the singular eigenvalues at the rate permitted by their regularity, and avoids spurious eigenvalues.

\begin{figure}[!htbp]
    \centering
    \includegraphics[width=0.7\linewidth]{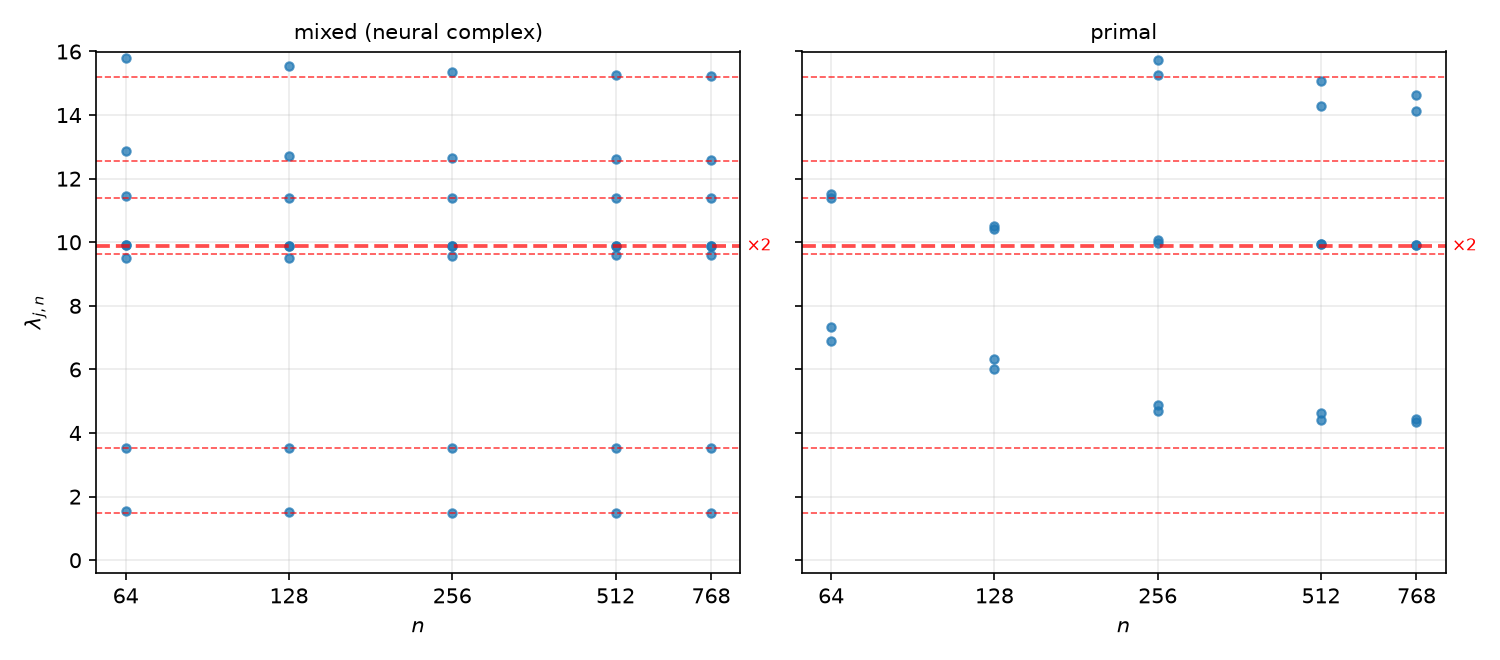}
     \includegraphics[width=0.7\linewidth]{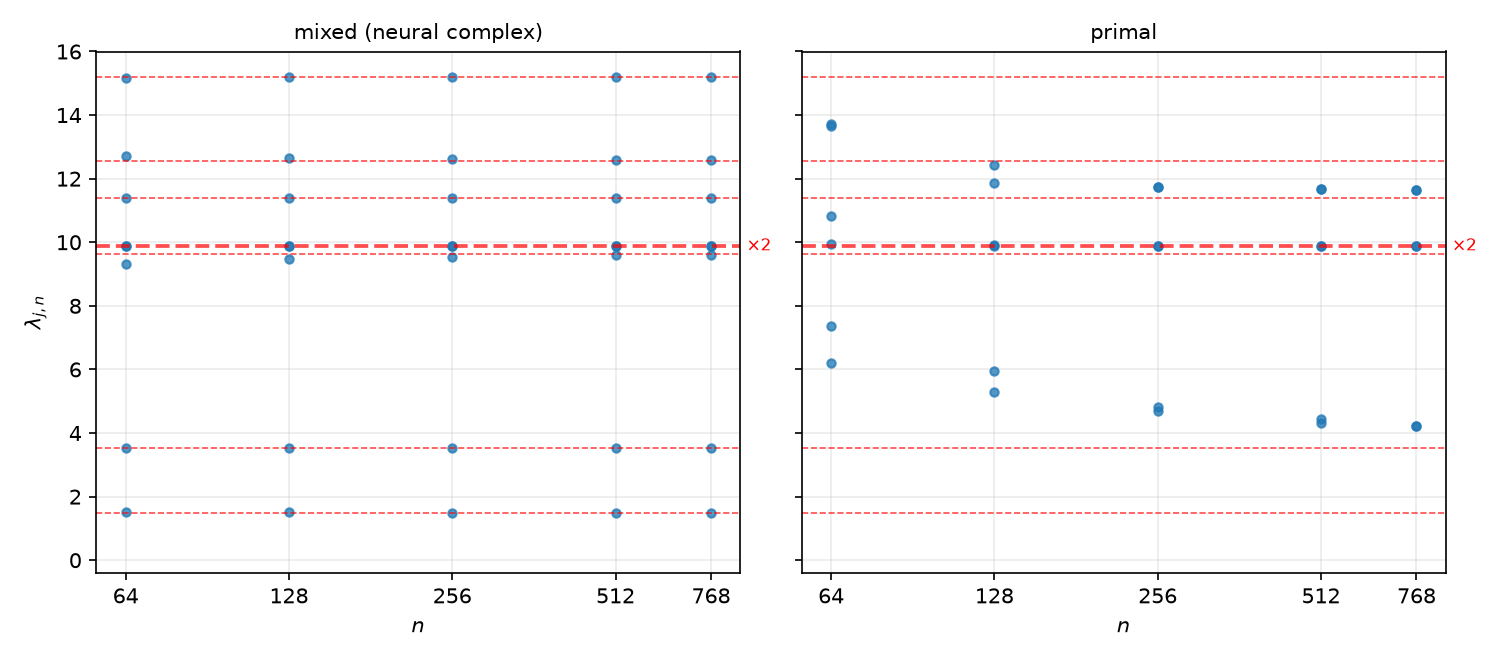}
    \caption{Computed spectra for the L-shaped-domain Hodge–Laplacian eigenvalue problem. The top and bottom panels show the cases $k=1$ and $k=2$, respectively. In each panel, the mixed neural-complex formulation is shown on the left and the primal formulation on the right. Blue markers denote computed eigenvalues, and red dashed lines denote reference eigenvalues.}
    \label{fig:lshape-hodge-eigenvalue}
\end{figure}

\begin{figure}[!htbp]
\centering
\includegraphics[width=.7\linewidth]{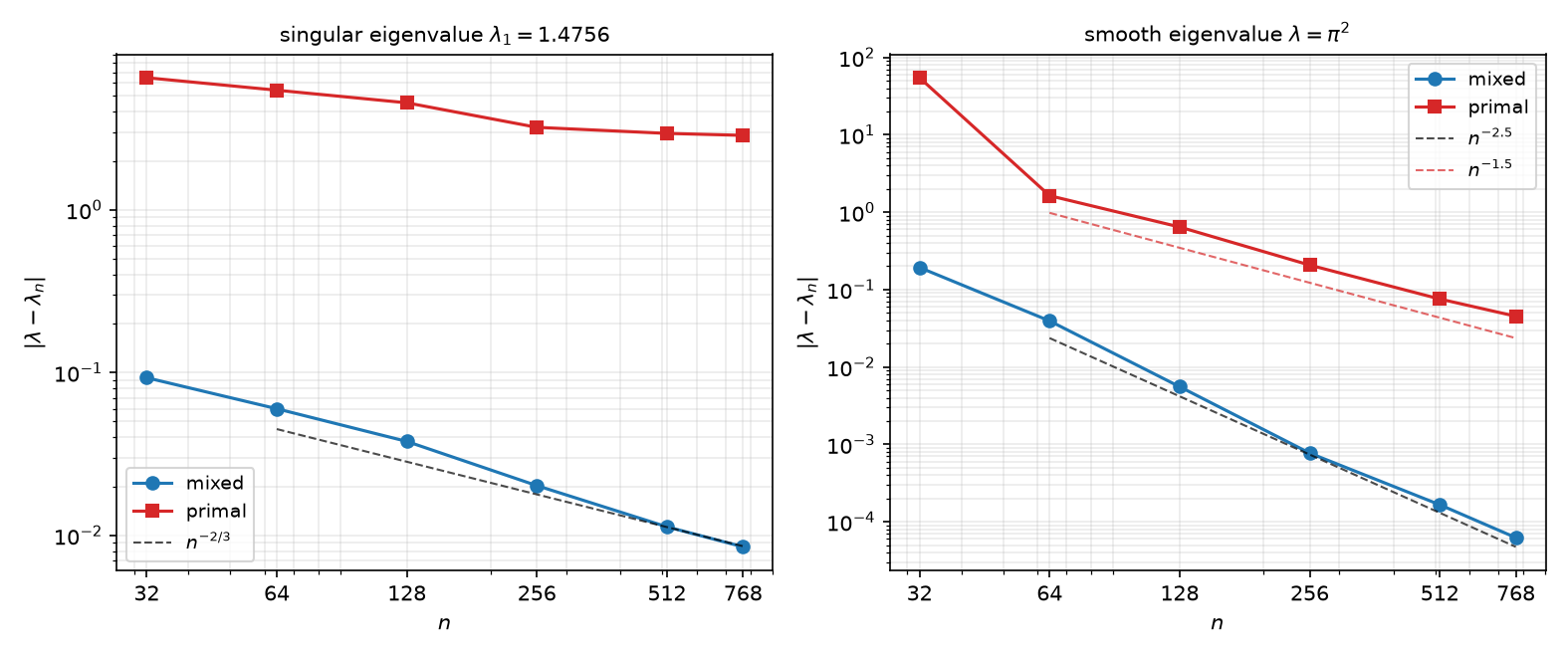}
\includegraphics[width=.7\linewidth]{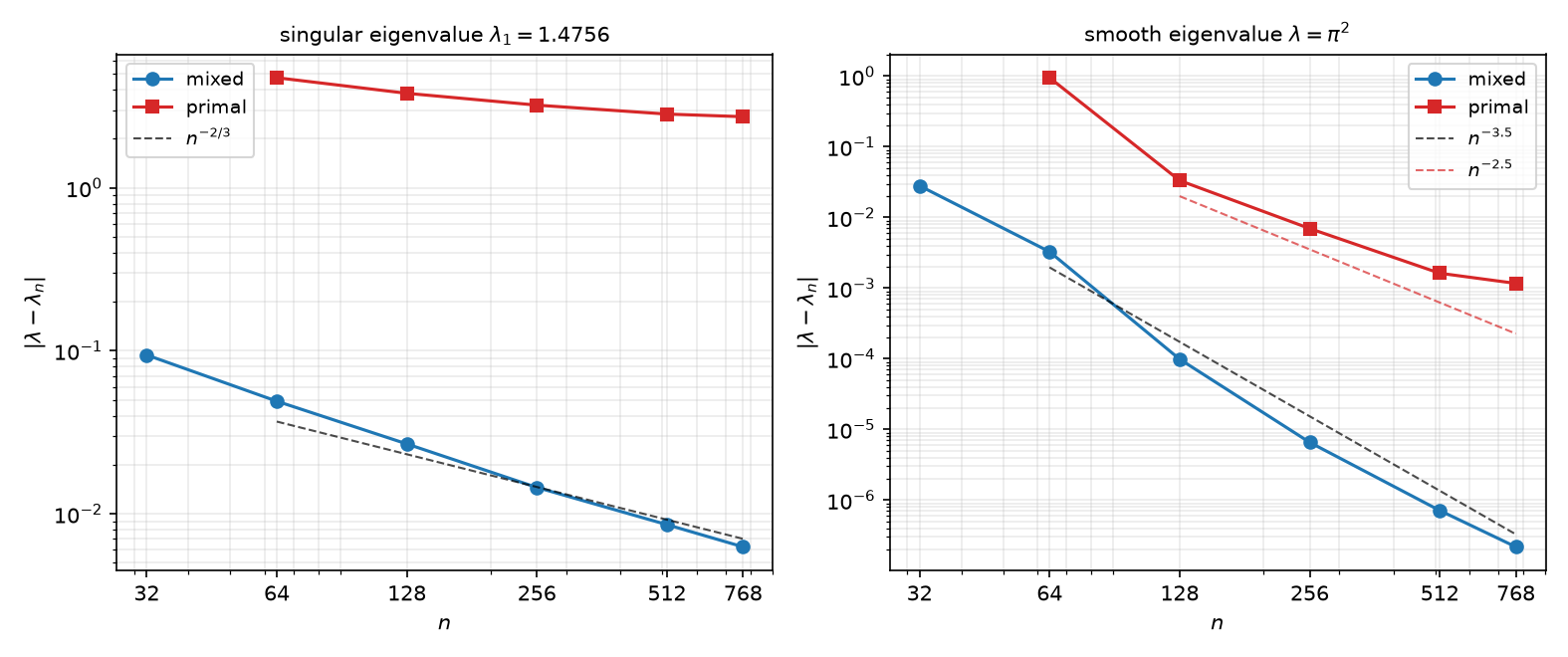}
\caption{Log–log convergence plots for the L-shaped-domain Hodge–Laplacian eigenvalue problem. The left and right columns show singular and smooth eigenvalues, respectively, while the top and bottom rows correspond to $k=1$ and $k=2$.}
\label{fig:lshape-hodge-lambda1-rate}
\end{figure}

\section{Concluding remarks and future work}
\label{sec:discussion}

In this paper we constructed an exact ReLU$^k$ neural de Rham subcomplex generated by fixed neurons.  The construction is straightforward and relies on two simple ingredients: linear independence of the neurons and the neuron-wise Koszul exactness.  Under the lowest-order linear independence assumption, the exterior derivative separates the different neurons and reduces the exactness proof to elementary algebra. In this sense, the resulting complex provides a mesh-free analogue of a finite-dimensional de~Rham subcomplex inside a linearized shallow ReLU$^k$ neural network space. The numerical experiments also provide numerical evidence for the usefulness of the compatible construction.

The significance of the construction is twofold.  First, it gives a compatible differential structure for fixed-neuron ReLU$^k$ spaces. Second, when combined with existing approximation results for linearized ReLU$^k$ networks, it suggests a possible route toward structure-preserving neural discretizations of partial differential equations.  Exactness gives the correct discrete kernel--range identities and rules out algebraic spurious cohomology within the constructed complex.  These properties are precisely the algebraic ingredients that underlie stable mixed finite element methods and FEEC.  

The algebraic exactness established in this paper provides a foundation for several further developments. A natural first direction is the construction of boundary-compatible neural complexes. The spaces considered here are generated by unrestricted ridge functions and constant-coefficient differential forms, which makes them flexible approximation spaces in the interior of the domain. For PDE applications, it would be useful to construct neural subspaces whose tangential or normal traces satisfy the prescribed boundary conditions by design. Weak enforcement through penalty or least-squares terms provides a practical starting point. 

A second direction is to extend the construction to domains with nontrivial topology. The complex studied here is algebraically exact and therefore provides an appropriate model for contractible domains. On nontrivial domains, a structure-preserving neural complex should contain suitable harmonic representatives or additional degrees of freedom. The existence of a bounded cochain interpolation operator for the present spaces also remains open. One possible direction is to combine the present neuron-wise construction with the finite element system framework~\cite{christiansen2018generalized}, where local compatibility and global cohomology can be treated systematically. 

A third direction is to investigate other activation families for which differentiation maps one level of the associated function hierarchy into another, and whose linearized spaces possess suitable approximation and linear-independence properties. Fourier modes on a torus provide a particularly natural example. For every nonzero Fourier frequency, the corresponding de Rham sequence reduces to an exact Koszul complex, whereas the zero-frequency component represents the nontrivial cohomology of the torus. This decomposition suggests a possible route toward neural or spectral complexes that simultaneously encode local exactness and global topology.

These questions indicate that exactness is only the first step toward a full FEEC-type theory for neural network discretizations.  The present construction provides a simple algebraic model showing that shallow ReLU$^k$ neural spaces can be organized into a compatible de Rham sequence.  Developing boundary-compatible spaces, topology-aware neural complexes, and stability estimates for mixed PDE formulations would turn the present shallow ReLU$^k$ complexes into a systematic framework for structure-preserving neural discretizations of differential-form PDEs.

\section*{Acknowledgment}
The work of KH was supported by a Royal Society University Research Fellowship (URF$\backslash$R1$\backslash$221398) and an ERC Starting Grant (project 101164551, GeoFEM). The work of JW was supported by a Royal Society Newton International Fellowship (NIF$\backslash$R1$\backslash$252881). The work of JX was supported by King Abdullah University of Science and Technology (KAUST) Baseline Research Fund. Views and opinions expressed are however those of the authors only and do not necessarily reflect those of the European Union or the European Research Council. Neither the European Union nor the granting authority can be held responsible for them.

\bibliography{ref}

\end{document}